\numberwithin{equation}{section}
 \def\Hom{\mbox{\rm Hom}} \def\dim{\mbox{\rm dim}\,} \def\Iso{\mbox{\rm Iso}\,}
    \def\mod{\mbox{\rm \textbf{mod}}\,}
\def\cone{\mbox{\rm cone}}
\def\ind{\mbox{\rm ind}\,}\def\cim{\mbox{\rm sim}}\def\add{\mbox{\rm add}}
\def\Filt{\mbox{\rm \textbf{Filt}}}
\def\f-tors{\mbox{\rm f-tors}} \def\s\tau-tilt{\mbox{\rm s\tau-tilt}}
\theoremstyle{plain}
\newtheorem{theorem}{\bf Theorem}[section]
\newtheorem{lemma}[theorem]{\bf Lemma}
\newtheorem{corollary}[theorem]{\bf Corollary}
\newtheorem{proposition}[theorem]{\bf Proposition}
\newtheorem*{theorem A}{\bf Theorem A}
\newtheorem*{theorem B}{\bf Theorem B}
\newtheorem*{theorem C}{\bf Theorem C}
\newtheorem*{theorem D}{\bf Theorem D}
\newtheorem*{theorem E}{\bf Theorem E}
\theoremstyle{definition}
\newtheorem{definition}[theorem]{\bf Definition}
\newtheorem{remark}[theorem]{\bf Remark}
\newtheorem{example}[theorem]{\bf Example}
\newcommand{\bt}{\begin{theorem}}
	\newcommand{\et}{\end{theorem}}
\newcommand{\bl}{\begin{lemma}}
	\newcommand{\el}{\end{lemma}}
\newcommand{\bd}{\begin{definition}}
	\newcommand{\ed}{\end{definition}}
\newcommand{\bc}{\begin{corollary}}
	\newcommand{\ec}{\end{corollary}}
\newcommand{\bp}{\begin{proof}}
	\newcommand{\ep}{\end{proof}}
\newcommand{\bx}{\begin{example}}
	\newcommand{\ex}{\end{example}}
\newcommand{\br}{\begin{remark}}
	\newcommand{\er}{\end{remark}}
\newcommand{\be}{\begin{equation}}
	\newcommand{\ee}{\end{equation}}
\newcommand{\ba}{\begin{align}}
	\newcommand{\ea}{\end{align}}
\newcommand{\bn}{\begin{enumerate}}
	\newcommand{\en}{\end{enumerate}}
\newcommand{\bcs}{\begin{cases}}
	\newcommand{\ecs}{\end{cases}}
\newcommand{\RNum}[1]{\uppercase\expandafter{\romannumeral #1\relax}}
\renewcommand{\section}{\@startsection{section}{1}{0mm}
	{-\baselineskip}{0.5\baselineskip}{\bf\leftline}}
\begin{document}
	\title[The first Brauer-Thrall conjecture for extriangulated length categories]{The first Brauer-Thrall conjecture for extriangulated length categories}
	\author[L. Wang, J. Wei]{Li Wang, Jiaqun Wei}
	\address{School of Mathematics-Physics and Finance, Anhui Polytechnic University, 241000 Wuhu, Anhui, P. R. China \endgraf}
	\email{wl04221995@163.com {\rm (L. Wang)}}
	\address{School of Mathematical Sciences, Zhejiang Normal University, 321004 Jinhua, Zhejiang, P. R. China\endgraf}
	\email{weijiaqun5479@zjnu.edu.cn {\rm (J. Wei)}}

	
	\subjclass[2020]{16G20; 16G60;  18G80; 18E10.}
	\keywords{Extriangulated length category; Semibrick; First Brauer-Thrall conjecture; Gabriel-Roiter measure}
	
	\begin{abstract} Let $(\mathcal{A},\Theta)$ be a length category. We introduce the notation of Gabriel-Roiter measure with respect to $\Theta$ and extend  Gabriel's main property to this setting.  Using this measure, when $(\mathcal{A},\Theta)$ satisfies some technical conditions, we prove that $\mathcal{A}$ has an infinite number of pairwise nonisomorphic indecomposable objects if and only if it has indecomposable objects of arbitrarily large length. That is,  the first Brauer-Thrall conjecture holds.
	\end{abstract}
	

	\maketitle
	\section{Introduction}
	The first Brauer-Thrall conjecture states that a finite dimension algebra $\Lambda$  is of infinite representation type (i.e. $\mod \Lambda$ has an infinite number of pairwise nonisomorphic indecomposable representations) if and only if $\Lambda$ is of unbounded representation type (i.e. $\mod \Lambda$ has indecomposable representations of arbitrarily large composition length). This conjecture was proved by Roiter in \cite{Ro} by constructing a function that assigns natural numbers to indecomposable modules of finite length.
	
	An abelian category is a  length category if every objects has a finite composition series. The notation of the {\em Roiter measure} was introduced by Gabriel in \cite{Ga} for abelian length categories, which is  a formalization of the induction scheme used in Roiter's proof. 
	
	Both Roiter and Gabriel have assumed from the beginning that there is an upper bound for lengths of indecomposable objects.  Ringle noticed that the formalism of Roiter and Gabriel works as well for arbitrary artin algebra having unbounded representation type. To clarify this matter, Ringle \cite{Ri} defined and studied the  {\em Gabriel-Roiter measure} for any artin algebra. He showed that an  artin algebra of infinite representation type has an infinite chain of Gabriel-Roiter measures. In this manner, two different proofs of the first Brauer-Thrall conjecture were provided.
	
	Although the Gabriel-Roiter measure arises in representation theory, it is actually purely  combinatorial. Krause in \cite{Kr} established an axiomatic characterization of the Gabriel-Roiter measure for a given abelian length category.  In this setting, the Gabriel-Roiter measure $l^{\ast}$ with respect to a length function $l$  is a chain length function of partially ordered sets. This function $l^{\ast}$ process the isomorphism classes of indecomposable objects into  finite chains of $\mathbb{R}_{\geq0}$, which is an appropriate refinement of the composition length. Later, Br$\rm \ddot{u}$stle, Hassoun, Langford and Roy in \cite{Br} investigated the Gabriel-Roiter measure for a finite exact category and how it changes  under reduction of exact structures.

	Besides the  works mentioned, the Gabriel-Roiter measure also has been used in many mathematics, e.g.  Auslander-Reiten theory \cite{Ch2}, wild representation type and GR segment \cite{Ch}, Ziegler spectrum \cite{He}, thin representations \cite{Kra} and so on.
	
	Recently, Nakaoka and Palu \cite{Na} introduced the notion of extriangulated categories by extracting properties on triangulated categories and exact categories.  In \cite{Wa}, the authors defined what they called {\em extriangulated length categories}  as a generalization of abelian length categories. It was proved in \cite[Theorem 3.14]{Wa} that these categories correspond precisely to those extriangulated categories generated by simple-minded systems (see Definition \ref{D-2-5}).  They also  provided a unified framework for studying the torsion classes and $\tau$-tilting theory in this setting.
	
	For an extriangulated length category $\mathcal{A}$, there exists a length function $\Theta$ from the set  of isomorphism classes of objects in $\mathcal{A}$ to $\mathbb{N}$ (see Definition \ref{D-1}). The value $\Theta(M)$ is called a  length of $M$ for any $M\in\mathcal{A}$. For instance, if $\mathcal{A}$ is an abelian length category, then $\Theta(M)$  can be defined by the length of the composition series of $M$ (see Example \ref{E-2-9}). This naturally presents the following question:
	
	\vspace{1.5mm}
	$\mathbf{Question.}$ If the length of indecomposable objects in $\mathcal{A}$ has an upper bound, can we deduce that  $\mathcal{A}$ has a finite number of pairwise nonisomorphic indecomposable objects? That is, the first Brauer-Thrall conjecture is valid?
	\vspace{1.5mm}
	
	To resolve this, we investigate the Gabriel-Roiter measure in the setting of extriangulated length categories. Our strategy is to construct a chain of  measures know as the {\em Gabriel-Roiter chain}, which gives a partition of the isomorphism classes of indecomposable objects. By using this, we prove that the first Brauer-Thrall conjecture  holds for extriangulated length categories of finite type (see Theorem \ref{main2}). For the  case of infinite type, we provide a counter-example (see Example \ref{ex}).
	\vspace{2mm}
	
	$\mathbf{Organization.}$ This paper is organized as follows.  In Section 2, we summarize the definitions and characteristics of the extriangulated (length) categories, providing the foundation for subsequent discussions. Section 3 introduces the notion of  Gabriel-Roiter measure for a given extriangulated length category. We obtain an axiomatic characterization of Gabriel-Roiter measure and use it to  prove the Gabriel's main property. In section 4,  we provide a comprehensive answer for the first Brauer-Thrall conjecture.
	\vspace{2mm}

	$\mathbf{Conventions~and~Notation.}$ Throughout this paper, we assume that all considered categories are skeletally small and Krull-Schmidt, and that the subcategories are full and closed under isomorphisms.

	\section{Preliminaries}
	In this section, we collect some basic definitions and properties of extriangulated (length) categories from \cite{Na} and \cite{Wa}, which
	we use throughout this paper.
	
	\subsection{Extriangulated categories} Let  $\mathcal{A}$ be an additive category with a biadditive functor $\mathbb{E}:\mathcal{A}^{\rm op}\times \mathcal{A}\rightarrow Ab$, where  $Ab$ is the category of abelian groups. For any $A,C\in\mathcal{A}$, an element $\delta\in\mathbb{E}(C,A)$ is called an $\mathbb{E}$-{\em extension}. The zero element $0\in\mathbb{E}(C,A)$ is called the {\em split $\mathbb{E}$-extension}. For any morphism $a\in \Hom_{\mathcal{A}}(A,A')$ and $c\in \Hom_{\mathcal{A}}(C',C)$, we have $\mathbb{E}$-extensions
	$$\mathbb{E}(C,a)(\delta)\in\mathbb{E}(C,A')~\text{and}~\mathbb{E}(c,A)(\delta)\in\mathbb{E}(C',A).$$
	We simply denote them by $a_{\ast}\delta$ and $c^{\ast}\delta$,~respectively. Let $\delta\in\mathbb{E}(C,A)$, $\delta'\in\mathbb{E}(C',A')$ be any pair of $\mathbb{E}$-extensions. A morphism $(a,c)$: $\delta\rightarrow\delta'$ of $\mathbb{E}$-extensions is a pair of morphisms $a\in \Hom_{\mathcal{A}}(A,A')$ and $c\in \Hom_{\mathcal{A}}(C,C')$ satisfying the equality $a_{\ast}\delta=c^{\ast}\delta'$. By the biadditivity of $\mathbb{E}$, we have a natural isomorphism
	$$\mathbb{E}(C\oplus C',A\oplus A')\cong\mathbb{E}(C,A)\oplus\mathbb{E}(C,A')\oplus\mathbb{E}(C',A)\oplus\mathbb{E}(C',A').$$
	Let $\delta\oplus \delta'\in\mathbb{E}(C\oplus C',A\oplus A')$ be the element corresponding to $(\delta,0,0,\delta')$ through
	this isomorphism. Two sequences of morphisms  $A\stackrel{x}{\longrightarrow}B\stackrel{y}{\longrightarrow}C$ and $A\stackrel{x'}{\longrightarrow}B'\stackrel{y'}{\longrightarrow}C$ in $\mathcal{A}$ are said to be {\em equivalent} if there exists an isomorphism $b\in \Hom_{\mathcal{A}}(B,B')$ such that the following diagram is commutative.
	$$\xymatrix{
		A \ar@{=}[d] \ar[r]^-{x} & B\ar@{-->}[d]_{b}^-{\simeq} \ar[r]^-{y} & C\ar@{=}[d] \\
		A \ar[r]^-{x'} &B' \ar[r]^-{y'} &C  }$$ 
	We denote the equivalence class of $A\stackrel{x}{\longrightarrow}B\stackrel{y}{\longrightarrow}C$ by $[A\stackrel{x}{\longrightarrow}B\stackrel{y}{\longrightarrow}C]$. For any $A,C\in\mathcal{A}$,  we denote as
	$$0=[A\stackrel{1\choose0}{\longrightarrow}A\oplus C\stackrel{(0~1)}{\longrightarrow}C].$$
	For any two classes $[A\stackrel{x}{\longrightarrow}B\stackrel{y}{\longrightarrow}C]$ and $[A'\stackrel{x'}{\longrightarrow}B'\stackrel{y'}{\longrightarrow}C']$, we denote as
	$$[A\stackrel{x}{\longrightarrow}B\stackrel{y}{\longrightarrow}C]\oplus[A'\stackrel{x'}{\longrightarrow}B'\stackrel{y'}{\longrightarrow}C']=
	[A\oplus A'\stackrel{x\oplus x'}{\longrightarrow}B\oplus B'\stackrel{y\oplus y'}{\longrightarrow}C\oplus C'].$$
	
	\begin{definition} 	 Let $\mathfrak{s}$ be a correspondence which associates an equivalence class $\mathfrak{s}(\delta)=[A\stackrel{x}{\longrightarrow}B\stackrel{y}{\longrightarrow}C]$ to any $\mathbb{E}$-extension $\delta\in\mathbb{E}(C,A)$. We say $\mathfrak{s}$ is a {\em realization} of $\mathbb{E}$  if it satisfies the following condition ($\ast$). In this case, we say that  sequence  $A\stackrel{x}{\longrightarrow}B\stackrel{y}{\longrightarrow}C$ realizes $\delta$, whenever it satisfies $\mathfrak{s}(\delta)=[A\stackrel{x}{\longrightarrow}B\stackrel{y}{\longrightarrow}C]$.
		
		($\ast$) Let $\delta\in\mathbb{E}(C,A)$ and $\delta'\in\mathbb{E}(C',A')$ be any pair of $\mathbb{E}$-extensions, with $\mathfrak{s}(\delta)=[A\stackrel{x}{\longrightarrow}B\stackrel{y}{\longrightarrow}C]$, $\mathfrak{s}(\delta')=[A'\stackrel{x'}{\longrightarrow}B'\stackrel{y'}{\longrightarrow}C']$. Then for any morphism $(a,c)$: $\delta\rightarrow\delta'$, there exists a morphism $b\in \Hom_{\mathcal{A}}(B,B')$ such that the following diagram commutative.
		$$\xymatrix{
			A \ar[d]_{a} \ar[r]^-{x} & B\ar@{-->}[d]_{b} \ar[r]^-{y} & C\ar[d]_c \\
			A' \ar[r]^-{x'} &B' \ar[r]^-{y'} &C'  }$$ 
		In the above situation, we say that the triplet $(a,b,c)$ realizes $(a,c)$. A realization $\mathfrak{s}$ of $\mathbb{E}$ is said to be {\em additive} if it satisfies the following conditions:
		
		(a) For any $A,C\in\mathcal{A}$, the split $\mathbb{E}$-extension $0\in\mathbb{E}(C,A)$ satisfies $\mathfrak{s}(0)=0$.
		
		(b) $\mathfrak{s}(\delta\oplus\delta')=\mathfrak{s}(\delta)\oplus\mathfrak{s}(\delta')$ for any pair of $\mathbb{E}$-extensions $\delta$ and $\delta'$.
	\end{definition}
	
	\begin{definition}
		(\cite[Definition 2.12]{Na})\label{F}
		We call the triplet $(\mathcal{A}, \mathbb{E},\mathfrak{s})$ an {\em extriangulated category} if it satisfies the following conditions:\\
		$\rm(ET1)$ $\mathbb{E}$: $\mathcal{A}^{\rm op}\times\mathcal{A}\rightarrow Ab$ is a biadditive functor.\\
		$\rm(ET2)$ $\mathfrak{s}$ is an additive realization of $\mathbb{E}$.\\
		$\rm(ET3)$ Let $\delta\in\mathbb{E}(C,A)$ and $\delta'\in\mathbb{E}(C',A')$ be any pair of $\mathbb{E}$-extensions, realized as
		$\mathfrak{s}(\delta)=[A\stackrel{x}{\longrightarrow}B\stackrel{y}{\longrightarrow}C]$, $\mathfrak{s}(\delta')=[A'\stackrel{x'}{\longrightarrow}B'\stackrel{y'}{\longrightarrow}C']$. For any commutative square 
		$$\xymatrix{
			A \ar[d]_{a} \ar[r]^{x} & B \ar[d]_{b} \ar[r]^{y} & C \\
			A'\ar[r]^{x'} &B'\ar[r]^{y'} & C'}$$
		in $\mathcal{A}$, there exists a morphism $(a,c)$: $\delta\rightarrow\delta'$ which is realized by $(a,b,c)$.\\
		$\rm(ET3)^{\rm op}$~Dual of $\rm(ET3)$.\\
		$\rm(ET4)$~Let $\delta\in\mathbb{E}(D,A)$ and $\delta'\in\mathbb{E}(F,B)$ be $\mathbb{E}$-extensions realized by
		$A\stackrel{f}{\longrightarrow}B\stackrel{f'}{\longrightarrow}D$ and $B\stackrel{g}{\longrightarrow}C\stackrel{g'}{\longrightarrow}F$, respectively.
		Then there exist an object $E\in\mathcal{A}$, a commutative diagram
		\begin{equation*}
			\xymatrix{
				A \ar@{=}[d]\ar[r]^-{f} &B\ar[d]_-{g} \ar[r]^-{f'} & D\ar[d]^-{d} \\
				A \ar[r]^-{h} & C\ar[d]_-{g'} \ar[r]^-{h'} & E\ar[d]^-{e} \\
				& F\ar@{=}[r] & F   }
		\end{equation*}
		in $\mathcal{A}$, and an $\mathbb{E}$-extension $\delta''\in \mathbb{E}(E,A)$ realized by $A\stackrel{h}{\longrightarrow}C\stackrel{h'}{\longrightarrow}E$, which satisfy the following compatibilities:\\
		$(\textrm{i})$ $D\stackrel{d}{\longrightarrow}E\stackrel{e}{\longrightarrow}F$ realizes $\mathbb{E}(F,f')(\delta')$,\\
		$(\textrm{ii})$ $\mathbb{E}(d,A)(\delta'')=\delta$,\\
		$(\textrm{iii})$ $\mathbb{E}(E,f)(\delta'')=\mathbb{E}(e,B)(\delta')$.\\
		$\rm(ET4)^{\rm op}$ Dual of $\rm(ET4)$.
	\end{definition}
	
	In what follows, we always assume that $\mathcal{A}:=(\mathcal{A}, \mathbb{E},\mathfrak{s})$ is an extriangulated category. We will use the following terminology.
	
	$\bullet$ Given $\delta\in\mathbb{E}(C,A)$, if $\mathfrak{s}(\delta)=[A\stackrel{x}{\longrightarrow}B\stackrel{y}{\longrightarrow}C]$, then the sequence $A\stackrel{x}{\longrightarrow}B\stackrel{y}{\longrightarrow}C$ is called a {\em conflation}, $x$ is called an {\em inflation} and $y$ is called a {\em deflation}. In this case, we call $$A\stackrel{x}{\longrightarrow}B\stackrel{y}{\longrightarrow}C\stackrel{\delta}\dashrightarrow$$
	is an $\mathbb{E}$-triangle and denote $C=\cone(x)$.
	
	$\bullet$  Let $\mathcal{T},\mathcal{F}$ be two subcategories of $\mathcal{A}$. We define
	$$\mathcal{T}^{\perp}=\{M\in\mathcal{A}~|~\Hom_{\mathcal{A}}(X,M)=0~\text{for any}~X\in\mathcal{T}\},$$
	$$^{\perp}\mathcal{T}=\{M\in\mathcal{A}~|~\Hom_{\mathcal{A}}(M,X)=0~\text{for any}~X\in\mathcal{T}\},$$
	$$\mathcal{T}\ast \mathcal{F}=\{M\in \mathcal{A}~|~\text{there exists an $\mathbb{E}$-triangle}~T\stackrel{}{\longrightarrow}M\stackrel{}{\longrightarrow}F\stackrel{}\dashrightarrow~\text{with}~T\in\mathcal{T},F\in\mathcal{F} \}.$$
	We say a subcategory $\mathcal{C}$ of $\mathcal{A}$ is {\em extension-closed} if $\mathcal{C}\ast \mathcal{C}\subseteq \mathcal{C}$. 
	
	\begin{example}\label{E-2-3} Both exact categories and triangulated categories are typical examples of extriangulated categories. Besides, we may regard an extension-closed subcategory  of $\mathcal{A}$ as an extriangulated category. Explicitly, we have the following observations.
		
		(1) Let $\mathcal{A}$ be an exact category. For any $A,C\in\mathcal{A}$, we define $\mathbb{E}(C,A)$ to be the collection of all equivalence classes of short exact sequences of the form $A\stackrel{}{\longrightarrow}B\stackrel{}{\longrightarrow}C.$ For any $\delta\in\mathbb{E}(C,A)$, define the realization $\mathfrak{s}(\delta)$ to be $\delta$ itself. Then $(\mathcal{A}, \mathbb{E},\mathfrak{s})$ is an extriangulated category. We refer to \cite[Example 2.13]{Na} for more details.

		(2) Let $\mathcal{T}$ be a triangulated category with shift functor $[1]$. For any $A,C\in\mathcal{T}$, we define $\mathbb{E}(C,A):=\Hom_{\mathcal{T}}(C,A[1])$. For any $\delta\in\mathbb{E}(C,A)$, take a triangle $$A\stackrel{}{\longrightarrow}B\stackrel{}{\longrightarrow}C\stackrel{\delta}{\longrightarrow}A[1]$$
		and define the realization $\mathfrak{s}(\delta)=[A\stackrel{}{\longrightarrow}B\stackrel{}{\longrightarrow}C]$.  Then $(\mathcal{T}, \mathbb{E},\mathfrak{s})$ is an extriangulated category. We refer to \cite[Proposition 3.22]{Na} for more details.

		$(3)$ Let $\mathcal{C}$ be an extension-closed subcategory of $\mathcal{A}$. We define $\mathbb{E}_{\mathcal{C}}$ by the restriction of $\mathbb{E}$ onto $\mathcal{C}^{\rm op}\times \mathcal{C}$ and define $\mathfrak{s}_{\mathcal{C}}$ by restricting $\mathfrak{s}$. One can check directly that $(\mathcal{C},\mathbb{E}_{\mathcal{C}},\mathfrak{s}_{\mathcal{C}})$ is an extriangulated category (cf. \cite[Remark 2.18]{Na}).
	\end{example}
	
	\subsection{Extriangulated length categories} 	We denote by $\Iso(\mathcal{A})$ the set of isomorphism class of objects in $\mathcal{A}$. We often identify an isomorphism class  with its representative.
	
	\begin{definition}\label{D-1} (\cite[Definition 3.1]{Wa}) We say that a map $\Theta:\Iso(\mathcal{A})\rightarrow \mathbb{N}$ is a {\em length function} on $\mathcal{A}$ if it satisfies the following conditions:
		\begin{itemize}
			\item [(1)] $\Theta(X)=0$ if and only if $X\cong0$.
			\item [(2)] For any $\mathbb{E}$-triangle $X\stackrel{}{\longrightarrow}L\stackrel{}{\longrightarrow}M\stackrel{\delta}\dashrightarrow$ in $\mathcal{A}$, we have $\Theta(L)\leq \Theta(X)+\Theta(M)$. In addition, if $\delta=0$, then $\Theta(L)=\Theta(X)+\Theta(M)$.
		\end{itemize}
		For any $M\in\Iso(\mathcal{A})$, the value $\Theta(M)$ is called the {\em length} of $M$.
	\end{definition}
	
	Let $\Theta$ be a length function on $\mathcal{A}$. We say that an $\mathbb{E}$-triangle $$X\stackrel{x}{\longrightarrow}L\stackrel{y}{\longrightarrow}M\stackrel{\delta}\dashrightarrow$$ in $\mathcal{A}$ is {\em $\Theta$-stable} (or {\em stable} when $\Theta$ is clear from the context) if $\Theta(L)=\Theta(X)+\Theta(M)$. In this case,  $x$ is called a {\em $\Theta$-inflation}, $y$ is called a {\em $\Theta$-deflation} and $\delta$ is called a  {\em $\Theta$-extension}. If there is no confusion, we depict $x$ by the symbol $X\rightarrowtail L$ and $y$ by $L\twoheadrightarrow M$.
	
	\begin{remark}\label{R-2-5} Let $X\stackrel{x}\rightarrowtail L\stackrel{y}\twoheadrightarrow M\stackrel{}\dashrightarrow$ be a stable $\mathbb{E}$-triangle. Then $\Theta(X)\leq\Theta(L)$. It is easily checked that $\Theta(X)=\Theta(L)$ if and only if $x$ is an isomorphism. Dually, $\Theta(M)=\Theta(L)$ if and only if $y$ is an isomorphism.
	\end{remark}
	A morphism $f:M\rightarrow N$ in $\mathcal{A}$ is called {\em $\Theta$-admissible} if $f$ admits a {\em $\Theta$-decomposition $(i_{f}, X_{f},j_{f})$}, i.e. there is a commutative diagram
	$$\xymatrix{
		M\ar[rr]^{f}  \ar@{->>}[dr]_{i_{f}}
		&  &   N     \\
		& X_{f}  \ar@{>->}[ur]_{j_{f}}                }
	$$
	such that $i_{f}$ is a $\Theta$-deflation and $j_{f}$ is a $\Theta$-inflation.

	\begin{definition} (\cite[Definition 3.8]{Wa}) Let  $\Theta$  be a  length function on $\mathcal{A}$. We say that $((\mathcal{A}, \mathbb{E},\mathfrak{s}),\Theta)$ is an  {\em  extriangulated length category},  or simply a {\em length category}, if every morphism in $\mathcal{A}$ is $\Theta$-admissible.
		For simplicity, we often write   $(\mathcal{A},\Theta)$  for  $((\mathcal{A}, \mathbb{E},\mathfrak{s}),\Theta)$ when $\mathbb{E}$ and $\mathfrak{s}$ are clear from the context.
	\end{definition}
	
	\begin{remark} For a length category  $(\mathcal{A},\Theta)$, we may
		omit the length function $\Theta$ and simply say that $\mathcal{A}$ is a length category. We will see later in Example \ref{E-2-9} that abelian length categories and bounded derived categories of finite dimensional algebras with finite global dimension are  length categories. 
	\end{remark}

	Let $\mathcal{X}$ be a collection of objects in $\mathcal{A}$. The {\em filtration subcategory} $\mathbf{Filt_{\mathcal{A}}(\mathcal{X})}$ is consisting of all objects $M$ admitting a finite filtration of the form
	\begin{equation*}
		0=M_{0}\stackrel{f_{0}}{\longrightarrow}M_{1}\stackrel{f_{1}}{\longrightarrow}M_{2}{\longrightarrow}\cdots\xrightarrow{f_{n-1}}M_{n}=M
	\end{equation*}
	with $f_{i}$ being an inflation and $\cone(f_{i})\in\mathcal{X}$ for any $0\leq i\leq n-1$.  For each object $M\in\Filt_{\mathcal{A}}(\mathcal{X})$, the minimal length of $\mathcal{X}$-{filtrations} of $M$ is called the $\mathcal{X}$-{\em length} of $M$, which is denoted by $l_{\mathcal{X}}(M)$.  Note that $\Filt_{\mathcal{A}}(\mathcal{X})$ is closed under extensions by \cite[Lemma 2.8]{Wa3}. As stated in Example \ref{E-2-3}(3),  we may regard $\Filt_{\mathcal{A}}(\mathcal{X})$ as an extriangulated category.
	
	\begin{definition}\label{D-2-5}
		An object $M\in\mathcal{A}$ is called a {\em brick} if its endomorphism ring is a division ring. A set $\mathcal{X}$ of isomorphism classes of bricks in $\mathcal{A}$
		is called a {\em semibrick} if $\Hom_{\mathcal{A}}(X_1,X_2)=0$ for any two non-isomorphic objects $X_1,X_2$ in $\mathcal{X}$. If moreover $\mathcal{A}=\Filt_{\mathcal{A}}(\mathcal{X})$, then we say $\mathcal{X}$ is a {\em simple-minded system} in $\mathcal{A}$.
	\end{definition}
	
	For a length category  $(\mathcal{A},\Theta)$, we  define
	$$\Theta_{1}:=\{M\in\Iso(\mathcal{A})~|~0<\Theta(M)\leq \Theta(N)~\text{for any}~0\neq N\in\Iso(\mathcal{A})\}.$$
	For $n\geq2$, we  inductively define various sets  as follows:
	$$\Theta'_{n}=\{M\in\Iso(\mathcal{A})~|~M\in\Theta_{n-1}^{\perp}\bigcap{^{\perp}}\Theta_{n-1},\Theta(M)=n\}~\text{and}~\Theta_{n}=\Theta_{n-1}\bigcup\Theta'_{n}.$$
	Set $\Theta_{\infty}=\bigcup_{n\geq1}\Theta_{n}$. We have the following characterization of length categories.
	
	\begin{theorem}\label{main0}  Let $\mathcal{A}$ be an extriangulated category.
		
		$(1)$ If $\mathcal{X}$ is a simple-minded system in $\mathcal{A}$, then  $(\mathcal{A},l_{\mathcal{X}})$ is a length category.
		
		$(2)$ If $(\mathcal{A},\Theta)$ is a length category, then $\Theta_{\infty}$ is a simple-minded system in $\mathcal{A}$.
		
		$(3)$ $\mathcal{A}$ is a length category if and only if $\mathcal{A}$ has a simple-minded system.
		
	\end{theorem}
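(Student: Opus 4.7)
The plan is to prove the three parts in sequence, obtaining (3) as a formal consequence of (1) and (2).

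For part (1), I would verify that $(\mathcal{A}, l_{\mathcal{X}})$ satisfies the two axioms of Definition~\ref{D-1} together with the admissibility requirement of Definition~3.8. Non-triviality of $l_{\mathcal{X}}$ is immediate since $0$ is the only object admitting an empty $\mathcal{X}$-filtration. Given an $\mathbb{E}$-triangle $X\rightarrow L\rightarrow M\dashrightarrow$, I would take $\mathcal{X}$-filtrations of $X$ and of $M$ realizing $l_{\mathcal{X}}(X)$ and $l_{\mathcal{X}}(M)$, then lift the filtration of $M$ along $L\twoheadrightarrow M$ by iterated application of $\rm(ET4)^{op}$, and concatenate with the filtration of $X$ to obtain an $\mathcal{X}$-filtration of $L$; this yields the subadditivity $l_{\mathcal{X}}(L)\le l_{\mathcal{X}}(X)+l_{\mathcal{X}}(M)$. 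For equality in the split case $L\cong X\oplus M$, the concatenated filtration still realizes the upper bound, and the reverse inequality should follow by induction on filtration length, using the projections $X\oplus M\to X$ and $X\oplus M\to M$ together with the Hom-vanishing supplied by the semibrick property of $\mathcal{X}$.

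The admissibility clause is the delicate point. For a morphism $f\colon M\to N$ I would argue by double induction on $l_{\mathcal{X}}(M)+l_{\mathcal{X}}(N)$. Pick a top step of the $\mathcal{X}$-filtration of $M$, giving an $\mathbb{E}$-triangle $M'\rightarrowtail M\twoheadrightarrow S$ with $S\in\mathcal{X}$, and analyse the composition $M'\to M\to N$ using the induction hypothesis; similarly decompose through a bottom step of the filtration of $N$. Then use $\rm(ET4)$ to assemble the pieces into a single $\Theta$-decomposition $(i_f,X_f,j_f)$, with the length of $X_f$ controlled by the Krull–Schmidt decomposition and the brick property of each simple in $\mathcal{X}$.

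For part~(2), I would first show that each $S\in\Theta_{\infty}$ is a brick: any non-invertible $f\in\End(S)$ has a $\Theta$-decomposition $S\twoheadrightarrow X_f\rightarrowtail S$, so Remark~\ref{R-2-5} gives $\Theta(X_f)<\Theta(S)$, whence $X_f\in\Theta_{n-1}$ for appropriate $n$, and the orthogonality in the definition of $\Theta'_n$ forces $f=0$. Hom-vanishing between distinct members of $\Theta_{\infty}$ follows by the same $\Theta$-decomposition trick: an $f\colon S\to T$ with $\Theta(S)\le\Theta(T)$ factors through $X_f$ of strictly smaller length, which lies in some $\Theta_m$, and one of $i_f,j_f$ vanishes by orthogonality, so $f=0$. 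That $\mathcal{A}=\Filt_{\mathcal{A}}(\Theta_{\infty})$ is proved by induction on $\Theta(M)$: if $M\notin\Theta_{\infty}$ then $M$ fails one of the orthogonality conditions, producing a nonzero morphism $g$ between $M$ and some object of smaller length; the $\Theta$-decomposition of $g$ yields a stable $\mathbb{E}$-triangle with both outer terms of strictly smaller length, and the induction hypothesis splices the two filtrations.

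Part (3) is then immediate: (1) says a simple-minded system makes $\mathcal{A}$ a length category, and (2) extracts a simple-minded system from any length structure. The main obstacle I expect is the admissibility step in part (1): verifying that \emph{every} morphism admits a $\Theta$-decomposition requires a careful inductive construction that blends $\rm(ET4)$ with Krull–Schmidt and the semibrick axioms, and it is here that the combinatorial filtration data must be upgraded into genuine extriangulated factorizations. The filtration step in part~(2) is conceptually cleaner but still demands that one verify the length strictly decreases at each inductive stage, which again reduces to careful use of Remark~\ref{R-2-5}.
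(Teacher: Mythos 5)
The paper does not actually prove this theorem in situ: its ``proof'' is a citation to \cite[Theorem 3.9 and Proposition 3.13]{Wa}, so your proposal is necessarily a from-scratch reconstruction. As a plan it identifies the right pressure points, but it contains one concrete error and leaves the genuinely hard step unexecuted. The error is in part (2): from a non-invertible $f\in\End(S)$ with $S\in\Theta'_n$ you obtain a $\Theta$-decomposition through $X_f$ with $\Theta(X_f)<\Theta(S)=n$, and you then assert ``whence $X_f\in\Theta_{n-1}$.'' This does not follow: $\Theta_{n-1}$ is not the set of all objects of length at most $n-1$, but the inductively defined set cut out by the orthogonality conditions $M\in\Theta_{k-1}^{\perp}\cap{}^{\perp}\Theta_{k-1}$. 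An arbitrary $X_f$ of small length need not belong to any $\Theta_m$. What you actually need is $X_f\in\Filt_{\mathcal{A}}(\Theta_{n-1})$, so that $\Hom_{\mathcal{A}}(S,X_f)=0$ follows from $\Hom_{\mathcal{A}}(S,\Theta_{n-1})=0$ by d\'evissage along the filtration. Establishing that every object of length $<n$ lies in $\Filt_{\mathcal{A}}(\Theta_{n-1})$ is exactly your generation step, so the argument must be reorganized: prove generation first (by the strictly-decreasing-length induction you sketch, which works and does not use the semibrick property), and only then derive the brick and Hom-vanishing statements. As written, your order of quantifiers creates a circularity, and the same false substitution ``small length $\Rightarrow$ lies in $\Theta_m$'' reappears in your Hom-vanishing argument for distinct members of $\Theta_\infty$.

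In part (1), the subadditivity via iterated $\rm(ET4)^{op}$ is fine, but the two points you yourself flag as delicate --- the equality $l_{\mathcal{X}}(X\oplus M)=l_{\mathcal{X}}(X)+l_{\mathcal{X}}(M)$ for split extensions, and the admissibility of an arbitrary morphism --- are precisely the content of \cite[Theorem 3.9]{Wa}, and your proposal stops at ``argue by double induction and assemble with $\rm(ET4)$.'' That is a statement of intent rather than an argument: in particular, nothing in your sketch explains why the object $X_f$ produced by splicing the inductive pieces is again a $\Theta$-stable factorization (i.e.\ why the lengths add up on both sides), which is where the semibrick orthogonality must actually be invoked. Until those two inductions are carried out, part (1), and hence part (3), remains unproved.
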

	\begin{proof} This follows from \cite[Theorem 3.9 and Proposition 3.13]{Wa}.
	\end{proof}
	
	\begin{example}\label{E-2-9} (1) Let $\mathcal{A}$ be an abelian length category. We denoted by $\cim(\mathcal{A})$ the set of isomorphism classes of simple objects in $\mathcal{A}$. It is straightforward to check that $\cim(\mathcal{A})$ is a simple-minded system in $\mathcal{A}$. Then Theorem \ref{main0}  implies that $\mathcal{A}$ is a length category.
		
		$(2)$ Let $\Lambda$ be a finite dimensional algebra of finite global dimension. It was shown in \cite[Example 2]{Du} that there exists a simple-minded system in bounded derived category $D^{b}(\Lambda)$. Thus $D^{b}(\Lambda)$ is a length category. We refer to \cite[Example 3.25]{Wa} for more details.
	\end{example}
	
	We collect some useful results on length categories.
	\begin{proposition}\label{P-2-6} Let $(\mathcal{A},\Theta)$ be a length category.
		
		$(1)$ For any $A,B\in\mathcal{A}$, we have $\Theta(A\oplus B)=\Theta(A)+\Theta(B)$.
		
		$(2)$ The classes of $\Theta$-inflations $($resp. $\Theta$-deflations$)$ is closed under compositions.
		
		$(3)$ Let $f:X\rightarrow Y$ and  $g:Y\rightarrow Z$ be two morphisms in $\mathcal{A}$. If $gf$ is a $\Theta$-inflation, then so is $f$. Dually, if $gf$ is a $\Theta$-deflation, then so is $g$.

		$(4)$ Take $X\in\Theta_{1}$. If $f:X\rightarrow M$ is a non-zero morphism in $\mathcal{A}$, then $f$ is a $\Theta$-inflation. Dually,  if $g:M\rightarrow X$ is a non-zero morphism in $\mathcal{A}$, then $g$ is a $\Theta$-deflation.

		$(5)$ Suppose that $\Theta_1=\Theta_{\infty}$. For any $M\in\mathcal{A}$, there exists two stable $\mathbb{E}$-triangles $X_{1}\stackrel{}\rightarrowtail M\stackrel{}\twoheadrightarrow M'\stackrel{}\dashrightarrow $ and $M''\stackrel{}\rightarrowtail M\stackrel{}\twoheadrightarrow X_{2}\stackrel{}\dashrightarrow$ such that $\Theta(X_{1})=\Theta(X_{2})=1$.
	\end{proposition}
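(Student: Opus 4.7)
The plan is to handle the five parts in turn, using (ET1)--(ET4) together with $\Theta$-decompositions. Parts (1), (2), (4), and (5) amount to direct length bookkeeping combined with the simple-minded system structure of Theorem~\ref{main0}, while (3) is more delicate and turns on a uniqueness (up to length) of the middle term in a $\Theta$-decomposition.

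For (1), the split extension $0\in\mathbb{E}(B,A)$ is realized by the $\mathbb{E}$-triangle $A\rightarrowtail A\oplus B\twoheadrightarrow B\dashrightarrow$, and the split clause of Definition~\ref{D-1}(2) immediately yields $\Theta(A\oplus B)=\Theta(A)+\Theta(B)$. For (2), given stable inflations $X\rightarrowtail Y$ with cone $C_{1}$ and $Y\rightarrowtail Z$ with cone $C_{2}$, I apply (ET4) to produce an object $E$, an $\mathbb{E}$-triangle $X\rightarrowtail Z\twoheadrightarrow E\dashrightarrow$ for the composite, and an auxiliary triangle $C_{1}\rightarrowtail E\twoheadrightarrow C_{2}\dashrightarrow$. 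The chain
\[
\Theta(X)+\Theta(C_{1})+\Theta(C_{2})=\Theta(Y)+\Theta(C_{2})=\Theta(Z)\leq\Theta(X)+\Theta(E)\leq\Theta(X)+\Theta(C_{1})+\Theta(C_{2})
\]
must collapse to equalities, so the composition is a stable inflation; the deflation case follows by (ET4)$^{\rm op}$.

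For (3), the plan is to decompose $f=j_{f}\circ i_{f}$ with $i_{f}:X\twoheadrightarrow X_{f}$ a $\Theta$-deflation and $j_{f}:X_{f}\rightarrowtail Y$ a $\Theta$-inflation, and then further decompose $gj_{f}=j'\circ i'$ with $i':X_{f}\twoheadrightarrow A$ and $j':A\rightarrowtail Z$, obtaining $gf=j'\circ(i'i_{f})$. By (2), the composite $i'i_{f}$ is a $\Theta$-deflation, so $(i'i_{f},A,j')$ is a $\Theta$-decomposition of $gf$. Since $gf$ is itself a stable inflation, the triple $(1_{X},X,gf)$ is also a $\Theta$-decomposition. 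Appealing to the expected fact that the length of the middle term of a $\Theta$-decomposition is independent of the chosen decomposition (a foundational property of length categories that I plan to draw from \cite{Wa}), we get $\Theta(A)=\Theta(X)$; combined with $\Theta(X)=\Theta(K)+\Theta(X_{f})$ from the cocone triangle of $i_{f}$ and $\Theta(X_{f})=\Theta(K')+\Theta(A)$ from that of $i'$, this forces $K=0=K'$, so by Remark~\ref{R-2-5} both $i_{f}$ and $i'$ are isomorphisms and $f$ is a $\Theta$-inflation. The dual treats the deflation case. The rigorous justification of this length-uniqueness of $\Theta$-decompositions is the main obstacle.

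For (4), let $X\in\Theta_{1}$ and $0\neq f:X\to M$ with $\Theta$-decomposition $f=j_{f}i_{f}$. Then $X_{f}\neq 0$ (else $f=0$), so $\Theta(X_{f})\geq\Theta(X)$ by minimality of $\Theta(X)$; the reverse inequality follows since $i_{f}$ is a deflation. Equality and Remark~\ref{R-2-5} force $i_{f}$ to be an isomorphism, so $f$ is a $\Theta$-inflation; the dual yields the deflation statement for $g:M\to X$. For (5), Theorem~\ref{main0}(2) identifies $\Theta_{\infty}$ as a simple-minded system, so under $\Theta_{1}=\Theta_{\infty}$ we have $\mathcal{A}=\Filt_{\mathcal{A}}(\Theta_{1})$. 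Any $M$ admits a filtration $0=M_{0}\rightarrowtail M_{1}\rightarrowtail\cdots\rightarrowtail M_{n}=M$ with each cone in $\Theta_{1}$; iterating (2) on the first $n-1$ inflations produces a stable inflation $X_{1}:=M_{1}\rightarrowtail M$, yielding the first triangle, while the last step $M_{n-1}\rightarrowtail M\twoheadrightarrow X_{2}$, with $X_{2}:=M_{n}/M_{n-1}\in\Theta_{1}$, is itself a stable triangle. The normalization $\Theta(X_{1})=\Theta(X_{2})=1$ follows from the convention that length functions on length categories are taken with minimum positive value $1$.
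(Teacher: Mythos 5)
The paper proves this proposition almost entirely by citation to \cite{Wa} and \cite{Wa3}, so your attempt to supply actual arguments is doing more work than the source; parts (1), (2) and (4) are correct and complete (your (ET4) length-collapse argument for (2) is exactly the right mechanism). The problems are in (3) and (5).

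For (3), you have correctly identified that everything hinges on the claim that the length of the middle term of a $\Theta$-decomposition is independent of the decomposition, but you leave that claim entirely unproved, merely hoping it can be ``drawn from \cite{Wa}''. This is not a routine bookkeeping fact: $j'$ need not be a monomorphism in an extriangulated category, so one cannot compare two decompositions of $gf$ by the usual image argument, and your reduction therefore stops exactly at the one point where a genuine idea is required. As written, (3) is incomplete.

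For (5), there is an unacknowledged gap that would make the argument fail as stated. The definition of $\Filt_{\mathcal{A}}(\mathcal{X})$ only requires the filtration maps $f_i$ to be \emph{inflations} with $\cone(f_i)\in\Theta_1$; nothing says they are $\Theta$-inflations, i.e.\ that the conflations $M_i\rightarrowtail M_{i+1}\twoheadrightarrow \cone(f_i)$ are stable (the length axiom gives only $\Theta(M_{i+1})\leq\Theta(M_i)+\Theta(\cone(f_i))$). Hence you cannot apply part (2) to compose them, and the assertion that the last step ``is itself a stable triangle'' is unjustified. Likewise, the closing appeal to a ``convention'' that the minimal positive value of $\Theta$ is $1$ is not a convention stated anywhere: a priori the minimum of $\Theta$ on $\Theta_1$ could be any positive integer. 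Both issues are repaired only through the nontrivial identification $\Theta=l_{\Theta_1}$, which is what the hypothesis $\Theta_1=\Theta_{\infty}$ buys via \cite[Lemma 3.17]{Wa}; one then takes a $\Theta_1$-filtration of \emph{minimal} length and deduces stability of the first and last triangles (and $\Theta(X_1)=\Theta(X_2)=1$) by counting filtration steps. This is precisely the route the paper takes by citing \cite[Lemma 3.17]{Wa} together with \cite[Lemma 3.5 and Corollary 3.6]{Wa3}, and it cannot be bypassed the way your sketch suggests.
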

	\begin{proof}

		(1) It follows immediately from the definition of length function.
		
		(2)--(4) The reader can find the statement (2)	in \cite[Lemma 3.6]{Wa}, (3) in \cite[Lemma 3.20]{Wa} and (4) in \cite[Lemma 3.11]{Wa}.
		
		(5) By \cite[Lemma 3.17]{Wa}, we have $\Theta_1=\Theta_{\infty}$ if and only if $\Theta_1=l_{\Theta_1}$. Then the statement follows from \cite[Lemma 3.5 and Corollary 3.6]{Wa3}.		
	\end{proof}

	\section{Gabriel-Roiter measure}
	First, we recall some notations of poset from \cite{Kr}. Let $(P,\leq)$ be a partially ordered set. A finite {\em chain} of $P$ is a totally ordered subset of $P$. We denote by $\mathrm{Ch}(P)$ the set of finite chains in $P$. For $\mathbf{X}\in\mathrm{Ch}(P)$, we denote by $\min \mathbf{X}$ its minimal element and by $\max \mathbf{X}$ its maximal element. We use the convention that $\max\emptyset< \mathbf{X}<\min\emptyset$ for any $\mathbf{X}\in\mathrm{Ch}(P)$.  On $\mathrm{Ch}(P)$ we consider the {\em lexicographical order} defined by
	$$\mathbf{X}\leq\mathbf{Y}:=~\min(\mathbf{Y}\backslash \mathbf{X})\leq \min(\mathbf{X}\backslash \mathbf{Y})~\text{for}~ \mathbf{X}, \mathbf{Y}\in \mathrm{Ch}(P).$$
	For any $x\in P$, we write
	$$\mathrm{Ch}(P,x)=\{\mathbf{X}\in\mathrm{Ch}(P)~|~\max(\mathbf{X})=x\}.$$
	
	In this section, we always assume that $((\mathcal{A}, \mathbb{E},\mathfrak{s}),\Theta)$  is a length category. Let $X,Y\in\Iso(\mathcal{A})$. We write $X\leq Y$ if there exists a $\Theta$-inflation $X\rightarrowtail Y$. In this case, we say $X$ is a {\em subobject} of $Y$. If moreover   $X\ncong Y$, we say that $X$ is a {\em proper subobject} of $Y$ and write $X<Y$.
	
	\begin{remark}\label{R-3-1}  It is obvious that the $X\leq X$ for any $X\in\Iso(\mathcal{A})$. Let $X,Y,Z\in\Iso(\mathcal{A})$.  If $X\leq Y\leq Z$, then  there exists two $\Theta$-inflations $f:X\rightarrowtail Y$ and  $g:Y\rightarrowtail Z$. By Proposition \ref{P-2-6}(2), the morphism $gf$ is a $\Theta$-inflation and thus $X\leq Z$. If moreover $X=Z$, then $\Theta(X)=\Theta(Y)$. By using Remark \ref{R-2-5}, we conclude that $X=Y$. The observation above implies that $(\Iso(\mathcal{A}),\leq)$ is actually a partially ordered set. 	
	\end{remark}
	We denote by $\ind(\mathcal{A})$ the set of isomorphism classes of indecomposable objects in $\mathcal{A}$. By Remark \ref{R-3-1}, we infer that the set $\ind(\mathcal{A})$ is a partially ordered sets with respect to the  relation $\leq$. Then the length function $\Theta$ induces a map $\Theta:\ind(\mathcal{A})\rightarrow \mathbb{N};~M\mapsto \Theta(M)$ of partial order sets. Following \cite{Kr}, we introduce the Gabriel-Roiter measure for a given length category, which will be useful for resolving  the first Brauer-Thrall conjecture.

	\begin{definition} A  {\em Gabriel-Roiter measure} of $\mathcal{A}$ with respect to $\Theta$ is a function
		$$\Theta^{\ast}:\ind(\mathcal{A})\rightarrow \mathrm{Ch}(\mathbb{N});~X\mapsto \max\{\Theta(\mathbf{X})|~\mathbf{X}\in\mathrm{Ch}(\ind(\mathcal{A}),X)\}.$$
		For any $M\in\ind(\mathcal{A})$, the value $\Theta^{\ast}(M)$ is called the {\em GR measure} of $M$.
	\end{definition}
	
	\begin{remark}
		Let $\mathcal{A}$ be an abelian length category and set $\Theta:=l_{\rm sim(\mathcal{A})}$. Recall from Example \ref{E-2-9}(1) that $(\mathcal{A},\Theta)$ is a length category. The  Gabriel-Roiter measure $\Theta^{\ast}_{H^{\ast}}$ for bounded derived category $D^{b}(\mathcal{A})$ was introduced by Krause  \cite{Kr2} to generalize the  Gabriel-Roiter measure $\Theta^{\ast}$ for $\mathcal{A}$. Explicitly, we have $\Theta=\Theta^{\ast}_{H^{\ast}}{\rm inc}$ for the 
		canonical inclusion ${\rm inc}:\ind(\mathcal{A})\rightarrow\ind(D^{b}(\mathcal{A}))$ (see \cite[Section 5]{Kr2}).
	\end{remark}
	
	\begin{lemma}\label{L-1} Let $X,Y\in\Iso(\mathcal{A})$.
		
		$(1)$ If $X<Y$, then $\Theta(X)<\Theta(Y)$.
		
		$(2)$ Either $\Theta(X)\leq\Theta(Y)$ or $\Theta(Y)\leq \Theta(X)$.
		
		$(3)$ The set $\{\Theta(X')~|~X'\in\Iso(\mathcal{A})~\text{and}~X'\leq X\}$ is finite.
	\end{lemma}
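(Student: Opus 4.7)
The plan is to prove the three items in order, using only the definition of a length function, the notion of stable $\mathbb{E}$-triangle, and Remark~\ref{R-2-5}. Item (1) will carry essentially all the content; items (2) and (3) will follow almost immediately.

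For (1), suppose $X<Y$, so there exists a $\Theta$-inflation $f\colon X\rightarrowtail Y$ with $X\not\cong Y$. By the definition of a $\Theta$-inflation, $f$ fits into a stable $\mathbb{E}$-triangle $X\stackrel{f}{\rightarrowtail} Y\twoheadrightarrow Z\dashrightarrow$, which gives the identity $\Theta(Y)=\Theta(X)+\Theta(Z)$. Since $f$ is not an isomorphism, Remark~\ref{R-2-5} forces $\Theta(X)\neq \Theta(Y)$, equivalently $\Theta(Z)>0$, so that $\Theta(X)<\Theta(Y)$ as required. The only subtle point is that one must appeal to Remark~\ref{R-2-5} to turn ``$X\not\cong Y$'' into ``$\Theta(Z)\neq 0$'', rather than trying to argue directly from Definition~\ref{D-1}(1) applied to $Z$, which would require knowing $Z\not\cong 0$ a priori.

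For (2), since $\Theta$ takes values in the totally ordered set $\mathbb{N}$, the statement is immediate: one of $\Theta(X)\leq\Theta(Y)$ or $\Theta(Y)\leq\Theta(X)$ must hold. There is nothing to do.

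For (3), fix $X\in\Iso(\mathcal{A})$ and let $X'\leq X$. Either $X'\cong X$, in which case $\Theta(X')=\Theta(X)$, or $X'<X$, in which case $\Theta(X')<\Theta(X)$ by part (1). In either case $\Theta(X')\in\{0,1,\dots,\Theta(X)\}$, so the set in question is contained in a finite subset of $\mathbb{N}$ and is therefore finite. The only conceivable obstacle here would be if the ordering $\leq$ allowed strict chains to have the same length; part (1) rules this out, which is precisely why (1) is stated and proved first.
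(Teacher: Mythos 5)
Your proof is correct and takes essentially the same route as the paper, whose entire proof is the one-line remark that the lemma follows immediately from the definition of the length function; you have simply made the details explicit (Remark~\ref{R-2-5} applied to the stable $\mathbb{E}$-triangle for (1), total ordering and boundedness in $\mathbb{N}$ for (2) and (3)).
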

	\begin{proof} This immediately follows from the definition of length function.
	\end{proof}

	\begin{proposition}\label{pro} Let $X,Y\in\ind(\mathcal{A})$. Then the Gabriel-Roiter measure $\Theta^{\ast}:\ind(\mathcal{A})\rightarrow \mathrm{Ch}(\mathbb{N})$ satisfies the following conditions:
		\begin{itemize}
			\item [(GR1)] If $X\leq Y$, then $\Theta^{\ast}(X)\leq \Theta^{\ast}(Y)$.
			\item [(GR2)] If $\Theta^{\ast}(X)=\Theta^{\ast}(Y)$, then $\Theta(X)=\Theta(Y)$.
			\item [(GR3)] If $\Theta(X)\geq \Theta(Y)$ and $\Theta^{\ast}(X')<\Theta^{\ast}(Y)$ for any $X'< X$, then $\Theta^{\ast}(X)\leq \Theta^{\ast}(Y)$.
		\end{itemize}
	\end{proposition}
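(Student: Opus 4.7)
My plan is to exploit the fact that the maximum in the definition of $\Theta^{\ast}(X)$ is always attained by some chain $\widetilde X\in\mathrm{Ch}(\ind(\mathcal{A}),X)$. Indeed, Lemma \ref{L-1}(3) ensures that only finitely many values of $\Theta$ occur on subobjects of $X$, so the candidate chains produce only finitely many lex-values and a maximum exists. Since $\Theta$ is strictly monotone on $\leq$-chains by Lemma \ref{L-1}(1), $\Theta(X)$ is the maximum of $\Theta(\widetilde X)=\Theta^{\ast}(X)$.

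Property (GR2) is then immediate, because $\Theta^{\ast}(X)=\Theta^{\ast}(Y)$ forces $\Theta(X)=\max\Theta^{\ast}(X)=\max\Theta^{\ast}(Y)=\Theta(Y)$. For (GR1), I take a maximizing chain $\widetilde X$ for $\Theta^{\ast}(X)$ and observe that, since $X\leq Y$, the set $\widetilde X\cup\{Y\}$ is again a chain in $\mathrm{Ch}(\ind(\mathcal{A}),Y)$. The case $X=Y$ is trivial; if $X<Y$ then $\Theta(Y)>\Theta(X)$ by Lemma \ref{L-1}(1), so $\Theta(\widetilde X\cup\{Y\})=\Theta^{\ast}(X)\cup\{\Theta(Y)\}$ is obtained from $\Theta^{\ast}(X)$ by appending a strictly larger element; the lex-value can only increase, giving $\Theta^{\ast}(X)\leq \Theta(\widetilde X\cup\{Y\})\leq\Theta^{\ast}(Y)$.

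For (GR3), I fix a maximizing chain $\widetilde X=\{X_{1}<\cdots<X_{n-1}<X\}$ and split on the length. If $n=1$, then $\Theta^{\ast}(X)=\{\Theta(X)\}$, and a direct lex comparison with $\Theta^{\ast}(Y)$, using only that $\Theta(X)\geq\max\Theta^{\ast}(Y)=\Theta(Y)$, yields the result. If $n\geq 2$, then $X_{n-1}<X$, so the hypothesis forces
\[
\Theta(\widetilde X\setminus\{X\})\leq\Theta^{\ast}(X_{n-1})<\Theta^{\ast}(Y).
\]
I then compare $\Theta^{\ast}(X)=\Theta(\widetilde X\setminus\{X\})\cup\{\Theta(X)\}$ with $\Theta^{\ast}(Y)$ by a short case analysis according to whether $\Theta(X)\in\Theta^{\ast}(Y)$ (which happens precisely when $\Theta(X)=\Theta(Y)$) or $\Theta(X)\notin\Theta^{\ast}(Y)$ (in which case $\Theta(X)>\Theta(Y)$).

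The main obstacle is precisely this last step: appending the new top value $\Theta(X)$ to a chain that lies strictly lex-below $\Theta^{\ast}(Y)$ need not preserve the inequality in general. The argument succeeds because $\Theta(X)\geq\max\Theta^{\ast}(Y)$ forces $\Theta(X)$ to be strictly larger than every element of $\Theta^{\ast}(Y)\setminus\{\Theta(X)\}$ and of $\Theta(\widetilde X\setminus\{X\})$. Hence $\Theta(X)$ can never be the \emph{minimum} of a symmetric-difference witness in the lex-order definition; the deciding minimum is controlled by the already known inequality between $\Theta(\widetilde X\setminus\{X\})$ and $\Theta^{\ast}(Y)$, which is enough to conclude $\Theta^{\ast}(X)\leq\Theta^{\ast}(Y)$.
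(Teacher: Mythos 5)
Your argument is correct, but it takes a different route from the paper: the paper does not carry out any of this combinatorics itself. It merely observes that Lemma \ref{L-1} says precisely that $\Theta$ is a length function on the poset $\ind(\mathcal{A})$ in the axiomatic sense of Krause, and then quotes \cite[Theorem 1.7]{Kr}, which asserts that the associated chain function satisfies (GR1)--(GR3). What you have written is in effect a self-contained proof of that cited theorem in the present setting. The places where you invoke structure of $\mathcal{A}$ --- attainment of the maximum via Lemma \ref{L-1}(3), the identity $\max\Theta^{\ast}(X)=\Theta(X)$ via Lemma \ref{L-1}(1), and transitivity of $\leq$ from Remark \ref{R-3-1} when extending a maximizing chain of $X$ by $Y$ in (GR1) --- are exactly the points where the hypotheses of Krause's theorem enter. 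Your treatment of (GR3) is the only delicate step and it is handled correctly: since $\Theta(X)\geq\max\Theta^{\ast}(Y)$, the appended value $\Theta(X)$ can never realize the minimum of the symmetric difference, so the lex comparison is decided by $\Theta(\widetilde X\setminus\{X\})<\Theta^{\ast}(Y)$; in the boundary case $\Theta(X)=\min(\Theta^{\ast}(Y)\setminus\Theta(\widetilde X\setminus\{X\}))$ one even gets $\Theta(\widetilde X\setminus\{X\})\subseteq\Theta^{\ast}(Y)$ because every element of $\Theta(\widetilde X\setminus\{X\})$ lies strictly below $\Theta(X)$, and containment forces $\Theta^{\ast}(X)\leq\Theta^{\ast}(Y)$. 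The trade-off between the two approaches is the usual one: the citation is shorter and makes transparent that the statement is purely order-theoretic, while your direct verification keeps the argument self-contained and shows exactly where each of the three properties of Lemma \ref{L-1} is used.
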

	\begin{proof} Note that $\Theta$ is a length function in the sense of \cite[Section 1]{Kr} by Lemma \ref{L-1}. Then the proof immediately follows from \cite[Theorem 1.7]{Kr}.
	\end{proof}
	
	We can now state the main property of the  Gabriel-Roiter measure. These result was proved by Gabriel in \cite{Ga} for artin algebras (see also \cite{Ri}), and later by Krause for abelian length categories (cf. \cite[Proposition 3.2]{Kr}).

	\begin{theorem}\label{main1} Let $(\mathcal{A},\Theta)$ be a  length category. Take $X,Y_{1},\cdots, Y_{n}\in\ind(\mathcal{A})$ such that $\Theta^{\ast}(Y_{k})=\max\{\Theta^{\ast}(Y_{i})~|~1\leq i\leq n\}$. Suppose that $f=(f_{1},\cdots,f_{n})^{T}:X\rightarrowtail Y=\bigoplus\limits_{i=1}^{n}Y_{i}$ is a  $\Theta$-inflation. Then $\Theta^{\ast}(X)\leq \Theta^{\ast}(Y_{k})$.  Moreover, if $\Theta^{\ast}(X)=\Theta^{\ast}(Y_{k})$, then there exists an isomorphism $f_s:X\rightarrow Y_s$ such that  $\Theta^{\ast}(Y_s)=\Theta^{\ast}(Y_{k})$.
	\end{theorem}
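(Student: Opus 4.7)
The plan is to argue by induction on $\Theta(X)$, combining the axioms (GR1)--(GR3) from Proposition \ref{pro} with the composition-closure of $\Theta$-inflations (Proposition \ref{P-2-6}(2)). This closure is the structural bridge to the induction: for any indecomposable proper subobject $X'<X$, the composite $\iota:X'\rightarrowtail X\stackrel{f}{\rightarrowtail}Y$ is a $\Theta$-inflation, so the induction hypothesis delivers $\Theta^{\ast}(X')\leq\Theta^{\ast}(Y_k)$. The base case $\Theta(X)=1$ is immediate: $\Theta^{\ast}(X)=\{1\}$ is the minimum chain, so the inequality holds, and if equality $\Theta^{\ast}(X)=\Theta^{\ast}(Y_k)$ is forced then (GR2) together with the maximality of $\Theta^{\ast}(Y_k)$ gives $\Theta(Y_i)=1$ for every $i$, so any nonzero component $f_s$ is an isomorphism via Proposition \ref{P-2-6}(4) and Remark \ref{R-2-5}.

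For the inductive step I split two cases. In Case A, every indecomposable $X'<X$ satisfies $\Theta^{\ast}(X')<\Theta^{\ast}(Y_k)$: when $\Theta(X)\geq\Theta(Y_k)$, axiom (GR3) with $Y=Y_k$ yields $\Theta^{\ast}(X)\leq\Theta^{\ast}(Y_k)$ directly; when $\Theta(X)<\Theta(Y_k)$ I use the structural identity $\Theta^{\ast}(X)=\Theta^{\ast}(X_{m-1})\cup\{\Theta(X)\}$ for some indecomposable $X_{m-1}<X$ realizing a maximal chain (or $\Theta^{\ast}(X)=\{\Theta(X)\}$ if $X$ admits no proper indecomposable subobject) and compare initial segments lexicographically against $\Theta^{\ast}(Y_k)$. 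In Case B some indecomposable $X'<X$ satisfies $\Theta^{\ast}(X')=\Theta^{\ast}(Y_k)$; applying the moreover clause of the induction hypothesis to the $\Theta$-inflation $\iota$ produces an isomorphism $f_s\circ\iota:X'\to Y_s$ for some $s$ with $\Theta^{\ast}(Y_s)=\Theta^{\ast}(Y_k)$. Then $(f_s\circ\iota)^{-1}\circ f_s$ is a left inverse to $\iota$, so the $\mathbb{E}$-triangle $X'\rightarrowtail X\twoheadrightarrow\cone(\iota)\dashrightarrow$ splits; indecomposability of $X$ together with $X'\neq 0$ forces $\cone(\iota)=0$, making $\iota$ an isomorphism and contradicting $X'<X$. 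Hence Case B cannot occur, and Case A closes the main inequality.

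For the moreover statement, assume $\Theta^{\ast}(X)=\Theta^{\ast}(Y_k)$; Case B is already excluded and $\Theta(X)=\Theta(Y_k)$ by (GR2). I analyze the $\Theta$-admissible factorizations $f_i=q_i\circ p_i$ with $p_i:X\twoheadrightarrow X_i$ and $q_i:X_i\rightarrowtail Y_i$. Assembling the $p_i$'s into $\bar{f}:X\to\bigoplus X_i$ and the $q_i$'s into the block-diagonal $g:\bigoplus X_i\to Y$ yields $f=g\circ\bar{f}$, where $g$ is a $\Theta$-inflation as a direct sum of inflations; Proposition \ref{P-2-6}(3) then forces $\bar{f}$ to be a $\Theta$-inflation as well. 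Decomposing each $X_i$ into indecomposable summands and applying the induction hypothesis to $\bar{f}$ (whose target summands have strictly smaller length whenever the corresponding $p_i$ is not an isomorphism), indecomposability of $X$ together with the equality $\Theta^{\ast}(X)=\Theta^{\ast}(Y_k)$ forces some $p_s$ to be an isomorphism for an index $s$ with $\Theta^{\ast}(Y_s)=\Theta^{\ast}(Y_k)$. Then $f_s=q_s\circ p_s$ is a $\Theta$-inflation $X\rightarrowtail Y_s$ with $\Theta(X)=\Theta(Y_s)$, and Remark \ref{R-2-5} upgrades $f_s$ to an isomorphism. The main obstacle will be executing the lexicographic chain comparisons in Case A when $\Theta(X)<\Theta(Y_k)$, and rigorously extracting the index $s$ with $p_s$ an isomorphism in the moreover step, where the extriangulated admissibility structure is decisive.
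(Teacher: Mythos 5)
Your overall architecture --- reduce to (GR1)--(GR3), factor each component $f_i$ through its image, and use a section/splitting argument to exclude a proper subobject of maximal measure --- matches the paper's, but your choice of induction variable creates a genuine gap. The paper inducts on $\Theta(X)+\Theta(Y)$, and the whole point of that choice is the case where $f_k$ fails to be a $\Theta$-deflation: there one replaces $Y$ by the strictly shorter $\overline{Y}=\bigoplus_i X_{f_i}$ (the images), keeps the \emph{same} $X$, and applies the induction hypothesis to the $\Theta$-inflation $X\rightarrowtail\overline{Y}$ supplied by Proposition \ref{P-2-6}(3). An induction on $\Theta(X)$ alone cannot license that step, and your proposed substitute in the sub-case $\Theta(X)<\Theta(Y_k)$ --- deducing $\Theta^{\ast}(X)=\Theta^{\ast}(X_{m-1})\cup\{\Theta(X)\}\leq\Theta^{\ast}(Y_k)$ from $\Theta^{\ast}(X_{m-1})<\Theta^{\ast}(Y_k)$ and $\Theta(X)<\Theta(Y_k)$ --- is false as a statement about chains: take $\Theta^{\ast}(X_{m-1})=\{1,2\}$, $\Theta^{\ast}(Y_k)=\{1,2,5\}$ and $\Theta(X)=3$; then $\{1,2\}<\{1,2,5\}$ and $3<5$, yet $\{1,2,3\}>\{1,2,5\}$ in the lexicographic order of Section 3. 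This is exactly why (GR3) carries the hypothesis $\Theta(X)\geq\Theta(Y)$; no purely combinatorial comparison closes this case, and the structural reduction to $\overline{Y}$ is unavoidable. The same flaw resurfaces in your ``moreover'' step, where you apply ``the induction hypothesis'' to $\bar f:X\rightarrowtail\bigoplus_i X_{f_i}$ --- the source is still $X$, so under an induction on $\Theta(X)$ this is the very statement you are trying to prove.

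Two smaller points. Your base case asserts that $\{1\}$ is the minimum of $\mathrm{Ch}(\mathbb{N})$; it is not --- $\{2\}<\{1\}$ under the stated order, and Lemma \ref{L-4-1}(3) needs the extra hypothesis $\Theta_1=\Theta_{\infty}$, which Theorem \ref{main1} does not assume. The base-case inequality is still salvageable: some component $f_i$ is nonzero, hence a $\Theta$-inflation by Proposition \ref{P-2-6}(4), and (GR1) gives $\Theta^{\ast}(X)\leq\Theta^{\ast}(Y_i)\leq\Theta^{\ast}(Y_k)$. Your Case B (producing a section $X'\rightarrowtail X$ and contradicting indecomposability) is correct and is essentially the paper's Case 1. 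To repair the proof, switch the induction to $\Theta(X)+\Theta(Y)$ and, when $f_k$ is not a $\Theta$-deflation, pass to the images $X_{f_i}$ exactly as in your ``moreover'' paragraph; that factorization is the right idea, but it has to be driven by the stronger induction rather than by a lexicographic comparison.
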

	\begin{proof}
		We proceed the proof by induction on $\Theta(X)+\Theta(Y)$. If $\Theta(X)+\Theta(Y)=2$, then $X\cong Y$ by Remark \ref{R-2-5}. Thus the assertion follows. Now suppose that $\Theta(X)+\Theta(Y)>2$. We take a $\Theta$-decomposition $(a_{i},Y'_{i},b_{i})$ for each morphism $f_{i}:X\rightarrow Y_{i}$. Recall that $\Theta^{\ast}(Y_{k})=\max\{\Theta^{\ast}(Y_{i})~|~1\leq i\leq n\}$. We consider two cases:
		
		$(\mathbf{Case~1})$ $f_{k}$ is a $\Theta$-deflation. Then $\Theta(X)\geq \Theta(Y_{k})$ since  $\Theta$ is a length function.  Let $X'$ be a proper indecomposable subobject of $X$. Then there exists a $\Theta$-inflation $g:X'\rightarrowtail X$ such that $\Theta(X')<\Theta(X)$. Note that $gf:X'\rightarrow Y$ is a $\Theta$-inflation by Proposition \ref{P-2-6}(2). By induction hypothesis, we have $\Theta^{\ast}(X')\leq \Theta^{\ast}(Y_{k})$. Moreover, if $\Theta^{\ast}(X')= \Theta^{\ast}(Y_{k})$, then there exists an isomorphism $f_sg:X'\rightarrow Y_s$ such that $\Theta^{\ast}(Y_s)=\Theta^{\ast}(Y_{k})$. In this case, $g$ is a section and hence $X'\cong X$. This is a contradiction. Thus we have $\Theta^{\ast}(X')<\Theta^{\ast}(Y_{k})$. Then (GR3) implies that $\Theta^{\ast}(X)\leq \Theta^{\ast}(Y_{k})$. In particular, if  $\Theta^{\ast}(X)= \Theta^{\ast}(Y_{k})$, then $\Theta(X)= \Theta(Y_{k})$ by (GR2). Thus $f_{k}$ is an isomorphism.

		$(\mathbf{Case~2})$ $f_{k}$ is not a $\Theta$-deflation. Since $b_{i}:Y'_{i}\rightarrowtail Y_{i}$ is a $\Theta$-inflation, we have $\Theta(Y'_{i})\leq \Theta(Y_{i})$  for any $1\leq i\leq n$. If $\Theta(Y'_{k})=\Theta(Y_{k})$, then $b_{k}$ is an isomorphism and hence $f_{k}\cong a_{k}$ is a $\Theta$-deflation. This is a contradiction, thus we have $\Theta(Y'_{k})<\Theta(Y_{k})$.  By Proposition \ref{P-2-6}(1), we have
		$$\Theta(\bigoplus\limits_{i=1}^{n} Y_{i}')=\sum\limits_{i=1}^{n}\Theta(Y_{i}')< \sum_{i=1}^{n}\Theta(Y_{i})=\Theta(Y).$$
		Observe that
		$${\rm diag}(b_{1},\cdots,b_{n})(a_{1},\cdots,a_{n})^{T}=(f_{1},\cdots,f_{n})^{T}=f.$$
		Then Proposition \ref{P-2-6}(3) implies that $(a_{1},\cdots,a_{n})^{T}:X\rightarrow \overline{Y}=\bigoplus\limits_{i=1}^{n} Y_{i}'$ is a $\Theta$-inflation. Take a decomposition $Y'_{i}=\bigoplus\limits_{j=1}^{s_{i}}Y_{ij}$ into indecomposable direct summands $Y_{ij}$. Recall that  $\Theta(\overline{Y})<\Theta(Y)$. By induction hypothesis, we have 
		$$\Theta^{\ast}(X)\leq \max\{\Theta^{\ast}(Y_{ij})~|~1\leq i\leq n,1\leq j\leq s_{i}\}\leq \max\{\Theta^{\ast}(Y_{i})~|~1\leq i\leq n\}=\Theta^{\ast}(Y_{k}).$$
		Suppose that $\max\{\Theta^{\ast}(Y_{ij})~|~1\leq i\leq n,1\leq j\leq s_{i}\}=\Theta^{\ast}(Y_{it})$. If $\Theta^{\ast}(X)= \Theta^{\ast}(Y_{k})$, then $\Theta^{\ast}(X)=\Theta^{\ast}(Y_{it})\leq \Theta^{\ast}(Y_{i})\leq \Theta^{\ast}(Y_{k})$ by (GR1). This implies that $\Theta^{\ast}(X)=\Theta^{\ast}(Y_{it})= \Theta^{\ast}(Y_{i})=\Theta^{\ast}(Y_{k})$. By (GR2), we have $\Theta(X)=\Theta(Y_{it})=\Theta(Y_{i})=\Theta(Y_{k})$. Note that $\Theta(Y_{it})\leq \Theta(Y'_{i}) \leq \Theta(Y_{i})$. Then $\Theta(Y'_{i})=\Theta(Y_{i})$ and thus $b_{i}:Y'_{i}\rightarrowtail Y_{i}$ is an isomorphism. It follows that $f_{i}\cong a_{i}$ is a $\Theta$-deflation. Since $\Theta(X)=\Theta(Y_{i})$, we infer  that $f_i:X\rightarrow Y_i$ is an isomorphism.
	\end{proof}
	
	\section{The first Brauer-Thrall conjecture}
	Our aim in this section is to provided a comprehensive answer to the first Brauer-Thrall in the setting of  length categories. In this section, we fix a connected artin algebra $\Lambda$ and denote by $R$ the center of $\Lambda$. We say an extriangulated  category  $(\mathcal{A}, \mathbb{E},\mathfrak{s})$ is {\em R-linear} if $\Hom_{\mathcal{A}}(A,B)$ and $\mathbb{E}(A,B)$ are  $R$-modules for any $A,B\in\mathcal{A}$. We write $\dim_{R}\Hom_{\mathcal{A}}(A,B)$ and $\dim_{R}\mathbb{E}(A,B)$ to denote the length of $\Hom_{\mathcal{A}}(A,B)$ and $\mathbb{E}(A,B)$ as an $R$-module, respectively. We start with the following Ext-lemma, which is a well-known result in  homological algebra (cf. \cite[Section 3]{Ri}).
	
	\begin{lemma}\label{L-2} Let $(\mathcal{A}, \mathbb{E},\mathfrak{s})$ be an $R$-linear extriangulated category. Let $$X^{n}\oplus Y\stackrel{f}{\longrightarrow} L\stackrel{g}{\longrightarrow} M\stackrel{\delta}\dashrightarrow$$
		be an $\mathbb{E}$-triangle in $\mathcal{A}$. Suppose that $X\in\ind(\mathcal{A})$. If $\dim_{R}\mathbb{E}(M,X)< n$, then $fp_{i}$ is a section for some canonical inclusion $p_{i}:X\rightarrow X^{n}\oplus Y$.
	\end{lemma}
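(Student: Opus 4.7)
The strategy is to translate the geometric question ``is $fp_i$ a section?'' into a linear-algebra problem about relations inside the $R$-module $\mathbb{E}(M,X)$, and then invoke the long exact sequence attached to an $\mathbb{E}$-triangle to promote a vanishing extension to a factorization through~$f$. Write $q_i\colon X^n\oplus Y\to X$ for the canonical projection onto the $i$-th copy of $X$, so that $q_ip_j=\delta_{ij}\cdot 1_X$ and $q_ip_Y=0$. Set $\delta_i:=(q_i)_*\delta\in\mathbb{E}(M,X)$ for $1\le i\le n$; the biadditive isomorphism $\mathbb{E}(M,X^n\oplus Y)\cong\mathbb{E}(M,X)^n\oplus\mathbb{E}(M,Y)$ records $\delta$ as the tuple $(\delta_1,\ldots,\delta_n,(q_Y)_*\delta)$.

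First I would produce a nontrivial relation $\sum_{i=1}^{n} c_i\delta_i=0$ in $\mathbb{E}(M,X)$ with at least one $c_i\in R$ acting as a unit on $X$. The indecomposability of $X$ makes $\End(X)$ local, whence the image $R'$ of $R\to\End(X)$, a commutative artinian ring with only trivial idempotents, is itself local. The $R$-action on $\mathbb{E}(M,X)$ factors through $R'$, giving an $R'$-module of length $<n$; Nakayama's lemma (applied to the $n$ elements $\delta_1,\ldots,\delta_n$ of an $R'$-module that needs fewer than $n$ generators) produces the desired relation. After reindexing and dividing by the unit coefficient inside $R'$, I may lift the result to an equality $\delta_n=\sum_{i<n}r_i\delta_i$ with $r_i\in R$.

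The second step converts this relation into the required section. Form
$$q\;:=\;q_n-\sum_{i<n}r_iq_i\colon X^n\oplus Y\longrightarrow X.$$
Then $q_*\delta=\delta_n-\sum_{i<n}r_i\delta_i=0$, so by the long exact sequence
$$\Hom_{\mathcal{A}}(L,X)\xrightarrow{f^*}\Hom_{\mathcal{A}}(X^n\oplus Y,X)\xrightarrow{\delta^{\#}}\mathbb{E}(M,X)$$
attached to the given $\mathbb{E}$-triangle (see \cite{Na}), there exists $s\colon L\to X$ with $sf=q$. Using $q_ip_n=\delta_{in}\cdot 1_X$, one computes
$$sfp_n=qp_n=q_np_n-\sum_{i<n}r_iq_ip_n=1_X,$$
so $s$ is a retraction of $fp_n$ and $fp_n$ is a section, as required.

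The main obstacle I anticipate is the first step: the bare length bound only guarantees \emph{some} $R$-linear relation among the $\delta_i$, while the argument requires one whose leading coefficient is a \emph{unit} so that $\delta_n$ can be solved for. The indecomposability of $X$ is exactly what rescues this, by allowing one to localize the scalar ring to the local ring $R'\subseteq\End(X)$ and apply Nakayama's lemma there. Everything else is formal manipulation inside an $R$-linear extriangulated category.
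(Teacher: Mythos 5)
Your proof is correct, and its skeleton is the same as the paper's: push $\delta$ forward along the projections onto the copies of $X$, use the length bound to find a relation among the resulting classes $\delta_i\in\mathbb{E}(M,X)$, realize the vanishing pushforward to factor a projection-like map through $f$, and compose with the inclusion to exhibit a retraction. (The paper phrases the factorization via the splitting of the realized zero extension rather than via the exact sequence $\Hom_{\mathcal{A}}(L,X)\to\Hom_{\mathcal{A}}(X^{n}\oplus Y,X)\to\mathbb{E}(M,X)$, but these are the same fact.) Where you genuinely diverge is the step you flagged as the main obstacle, and you are right to flag it: the paper takes an arbitrary non-trivial relation $\sum_i\lambda_i{\pi_i}_*(\delta)=0$ and asserts that $\lambda_i\neq 0$ makes $\lambda_i 1_X$ an isomorphism, which is unjustified when $R$ is merely the center of an artin algebra ($\lambda_i$ could act nilpotently on $X$). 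Your detour through the local ring $R'=\mathrm{im}(R\to\End(X))$ and Nakayama's lemma repairs this correctly (it does quietly use that the $R$-action on $\mathbb{E}(M,X)$ is compatible with $R\to\End(X)$, i.e.\ $r\cdot\delta=(r1_X)_*\delta$, which is what $R$-linearity should mean though the paper's definition is terse). Note that indecomposability of $X$ is not actually needed for this step: the chain $0\subseteq R\delta_1\subseteq R\delta_1+R\delta_2\subseteq\cdots$ in a module of length $<n$ must stabilize somewhere, which directly yields $\delta_j=\sum_{i<j}r_i\delta_i$ with leading coefficient $1$; so the unit coefficient comes for free and your argument could be simplified accordingly.
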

	\begin{proof} For each projection map $\pi_{i}:X^{n}\oplus Y\rightarrow X$, there exists a commutative diagram
		\begin{equation*}
			\xymatrix{
				X^{n}\oplus Y\ar[d]_{\pi_{i}} \ar[r]^-{f} &  L\ar[d]_{} \ar[r]^-{g} &  M\ar@{=}[d]_{} \ar@{-->}[r]^{\delta} &   \\
				X\ar[r]^{} & L_{i} \ar[r]^{} &  \ar@{-->}[r]^{{\pi_{i}}_{\ast}(\delta)} M &  . }
		\end{equation*}
		Since $\dim_{R}\mathbb{E}(M,X)< n$, there is a non-trivial linear combination
		$$\sum\limits_{i=1}^{n} \lambda_{i}{\pi_{i}}_{\ast}(\delta)=(\sum\limits_{i=1}^{n} \lambda_{i}{\pi_{i}})_{\ast}(\delta)=0$$
		for some $\lambda_{i}\in R$.
		Thus we have the following commutative  diagram
		\begin{equation*}
			\xymatrix{
				X^{n}\oplus Y\ar[d]_{\sum\limits_{i=1}^{n} \lambda_{i}{\pi_{i}}} \ar[r]^-{f} &  L\ar[d]_{b} \ar[r]^{} &  M\ar@{=}[d]_{} \ar@{-->}[r]^{\delta} &   \\
				X\ar[r]^{a} & L' \ar[r]^{} &  \ar@{-->}[r]^{0} M &  . }
		\end{equation*}
		Since $a$ is a section, there exists a retraction $a':L'\rightarrow X$ such that $a'a=1_{X}$. We may assume that $\lambda_i\neq 0$. Then
		$$a'bfp_{i}=(\sum\limits_{i=1}^{n} \lambda_{i}{\pi_{i}})p_{i}=\lambda_{i}1_{X}$$
		is an isomorphism. This implies that $fp_{i}$ is a section.
	\end{proof}
	
	Now let us return to the length category $(\mathcal{A},\Theta)$. Recall that there exists a Gabriel-Roiter measure $\Theta^{\ast}:\ind(\mathcal{A})\rightarrow \mathrm{Ch}(\mathbb{N})$ with respect to $\Theta$.
	
	\begin{lemma}\label{L-4-1} Take $M\in\ind(\mathcal{A})$.
		
		$(1)$ If $\Theta(M)=1$, then  $\Theta^{\ast}(M)=\{1\}$.
		
		$(2)$ If $\Theta(M)>1$ and  $M\notin\Theta_1^{\perp}$, then $\{1\}<\Theta^{\ast}(M)$.
		
		$(3)$ If $\Theta_1=\Theta_{\infty}$, then $\min\{\Theta^{\ast}(M)~|~M\in\ind(\mathcal{A})\}=\{1\}$. 
	\end{lemma}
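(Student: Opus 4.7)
The plan is to dispatch each part by unwinding the definition of $\Theta^{\ast}$ together with one tool from Section~2.

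For part~(1), when $\Theta(M)=1$ the only chain in $\mathrm{Ch}(\ind(\mathcal{A}),M)$ is the singleton $\{M\}$: any strict predecessor $X<M$ in $\ind(\mathcal{A})$ would satisfy $\Theta(X)<\Theta(M)=1$ by Lemma~\ref{L-1}(1), forcing $\Theta(X)=0$ and hence $X\cong 0\notin\ind(\mathcal{A})$ by Definition~\ref{D-1}(1). So $\Theta^{\ast}(M)=\{1\}$ directly from the definition. For part~(2), I would use $M\notin\Theta_{1}^{\perp}$ to pick a nonzero morphism $f\colon X\to M$ with $X\in\Theta_{1}$; Proposition~\ref{P-2-6}(4) upgrades $f$ to a $\Theta$-inflation, and Remark~\ref{R-2-5} combined with $\Theta(X)=1<\Theta(M)$ makes this strict, so $X<M$. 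Since $\Theta_{1}\subseteq\Theta_{\infty}$ is a simple-minded system by Theorem~\ref{main0}, every element of $\Theta_{1}$ is a brick and hence indecomposable; thus $\{X,M\}\in\mathrm{Ch}(\ind(\mathcal{A}),M)$ and $\Theta^{\ast}(M)\geq\{1,\Theta(M)\}$. A direct lex-order check then gives $\{1,\Theta(M)\}>\{1\}$.

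For part~(3), I would separately verify attainment and the lower bound. Attainment: $\Theta_{1}$ is nonempty whenever $\mathcal{A}\neq 0$, and part~(1) then yields $\Theta^{\ast}(M)=\{1\}$ for any $M\in\Theta_{1}$. Lower bound: fix $M\in\ind(\mathcal{A})$. If $\Theta(M)=1$, this is part~(1); if $\Theta(M)>1$, the hypothesis $\Theta_{1}=\Theta_{\infty}$ triggers Proposition~\ref{P-2-6}(5), producing a stable $\mathbb{E}$-triangle $X_{1}\rightarrowtail M\twoheadrightarrow M'\dashrightarrow$ with $\Theta(X_{1})=1$. This inflation is necessarily nonzero (otherwise it would split and force $M\cong X_{1}$ by indecomposability, contradicting $\Theta(M)>1$), so it certifies $M\notin\Theta_{1}^{\perp}$, and part~(2) yields $\{1\}<\Theta^{\ast}(M)$.

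No deep obstacle arises; the lemma essentially chains together Proposition~\ref{P-2-6}(4),~(5), Theorem~\ref{main0} and the combinatorics of $\mathrm{Ch}(\mathbb{N})$. The only delicate bookkeeping is the lex-order arithmetic --- specifically, confirming that adjoining the value~$1$ at the bottom of a chain $\mathbf{X}\subseteq\mathbb{N}$ with $1\notin\mathbf{X}$ strictly increases $\mathbf{X}$, and that $1\in\mathbf{X}$ is equivalent to $\{1\}\leq\mathbf{X}$. Both follow from the convention $\max\emptyset<\mathbf{X}<\min\emptyset$ because one of the two relevant set differences is empty, so the defining inequality $\min(\mathbf{Y}\setminus\mathbf{X})\leq\min(\mathbf{X}\setminus\mathbf{Y})$ reduces to a comparison with $+\infty$.
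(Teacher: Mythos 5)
Your parts (1) and (2) are correct and follow the same route as the paper: the paper likewise observes that $\mathrm{Ch}(\ind(\mathcal{A}),M)=\{M\}$ when $\Theta(M)=1$, and for (2) it takes a nonzero $f\colon S\to M$ with $S\in\Theta_1$, upgrades it to a $\Theta$-inflation via Proposition \ref{P-2-6}(4), and concludes $\{1\}=\Theta^{\ast}(S)<\Theta^{\ast}(M)$ (the paper gets strictness from (GR1) plus (GR2), whereas you exhibit the chain $\{X,M\}$ directly; both are fine). Part (3) also follows the paper's route, which is to deduce from Proposition \ref{P-2-6}(5) that no indecomposable of length $>1$ lies in $\Theta_1^{\perp}$ and then invoke (1) and (2).

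The one genuine flaw is your parenthetical in part (3): ``this inflation is necessarily nonzero (otherwise it would split and force $M\cong X_1$).'' In an extriangulated category a zero morphism can be an inflation of a \emph{non-split} conflation: in a triangulated category $0\colon A\to B$ is an inflation with cone $B\oplus A[1]$ and nonzero extension class, so ``zero inflation $\Rightarrow$ split conflation $\Rightarrow M\cong X_1\oplus M'$'' is not a valid implication, and your contradiction does not materialize as stated. The claim itself is true but needs a different mechanism: if $x=0$ in $X_1\stackrel{x}{\rightarrowtail}M\stackrel{y}{\twoheadrightarrow}M'$, then exactness of $\Hom(M',M)\xrightarrow{y^{\ast}}\Hom(M,M)\xrightarrow{x^{\ast}}\Hom(X_1,M)$ puts $1_M$ in $\Im y^{\ast}$, so $y$ is a section and $\Theta(M)\leq\Theta(M')=\Theta(M)-1$, a contradiction. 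Alternatively you can sidestep the issue entirely (and improve on the paper, which asserts $\Theta_1^{\perp}=\emptyset$ without justification): the relation $X_1\leq M$ is \emph{defined} by the existence of a $\Theta$-inflation, so Proposition \ref{P-2-6}(5) already gives $X_1<M$ with $X_1$ indecomposable of length $1$, hence the chain $\{X_1,M\}$ yields $\Theta^{\ast}(M)\geq\{1,\Theta(M)\}>\{1\}$ with no appeal to part (2) or to nonvanishing of the underlying morphism.
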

	\begin{proof} $(1)$	It is clear that $\mathrm{Ch}(\ind(\mathcal{A}),M)=\{M\}$ and thus $\Theta^{\ast}(M)=\{1\}$. 
		
		$(2)$ By hypothesis,  there exists a non-zero morphism $f:S\rightarrow M$ for some $S\in\Theta_1$.  Then Proposition \ref{P-2-6}(4) implies that $f$ is a $\Theta$-inflation. By using (1) together with (GR1), we conclude that $\{1\}=\Theta^{\ast}(S)<\Theta^{\ast}(M)$. 
		
		$(4)$ By Proposition \ref{P-2-6}(5), we infer that $\Theta_1^{\perp}$ is actually an empty set. Then the assertion follows from (1) and (2). 
	\end{proof}

	\begin{lemma}\label{L-4-3} Let $A\stackrel{}\rightarrowtail B\stackrel{f}\twoheadrightarrow C\stackrel{}\dashrightarrow$ and $A'\stackrel{}\rightarrowtail B'\stackrel{g}\twoheadrightarrow C\stackrel{}\dashrightarrow$ be two $\mathbb{E}$-triangles. Then $(f,g):B\oplus B'\rightarrow C$ is a $\Theta$-deflation.
\end{lemma}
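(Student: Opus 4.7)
The plan is to reduce the statement to a direct application of Proposition \ref{P-2-6}(3). Both hypothesized $\mathbb{E}$-triangles are stable, so $f\colon B\twoheadrightarrow C$ and $g\colon B'\twoheadrightarrow C$ are in particular $\Theta$-deflations. The key observation is that the map $(f,g)\colon B\oplus B'\to C$ factors through the canonical inclusion $\iota=\binom{1_B}{0}\colon B\to B\oplus B'$ via the identity
\[
(f,g)\circ\iota \;=\; f,
\]
and the right-hand side is already a $\Theta$-deflation by assumption.

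Now Proposition \ref{P-2-6}(3) states that whenever a composition $\alpha\beta$ is a $\Theta$-deflation, the outer factor $\alpha$ is itself a $\Theta$-deflation. Applying this with $\alpha=(f,g)$ and $\beta=\iota$, we conclude that $(f,g)$ is a $\Theta$-deflation, as desired.

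I do not anticipate any significant obstacle: the entire argument is a one-line factorization followed by a single appeal to a property already recorded in the preliminaries. One mildly noteworthy feature of this approach is that only the assumption on $f$ is actually used; the stable $\mathbb{E}$-triangle containing $g$ enters only to guarantee that $g$ is a well-defined morphism making $(f,g)$ meaningful, but its being a $\Theta$-deflation plays no role in the proof. This suggests that the lemma could equivalently be proved with the roles of $f$ and $g$ symmetric, using instead the inclusion $\binom{0}{1_{B'}}\colon B'\to B\oplus B'$.
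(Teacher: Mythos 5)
Your proof is correct, but it takes a genuinely different and much more economical route than the paper's. The paper constructs the cocone of $(f,g)$ explicitly: it forms the homotopy pullback $P$ of $f$ and $g$ via \cite[Proposition 3.15]{Na} and \cite[Lemma 3.21]{Wa}, extracts the $\mathbb{E}$-triangle $P\rightarrow B\oplus B'\stackrel{(f,g)}{\longrightarrow}C\dashrightarrow$ from \cite[Lemma 3.6]{Hu}, and then verifies the length identity $\Theta(B\oplus B')=\Theta(P)+\Theta(C)$ by a direct computation using the stability of both given triangles. You instead factor $f=(f,g)\circ{1_B\choose 0}$ and invoke the cancellation property of Proposition \ref{P-2-6}(3). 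This is legitimate: that proposition is stated for arbitrary morphisms in a length category, and the paper itself applies it in exactly this one-sided way elsewhere (e.g.\ in Claim 4 of the proof of Theorem \ref{main2}), so there is no circularity and no hidden hypothesis. Your observation that the assumption on $g$ is never used is also accurate --- your argument actually proves the stronger statement that $(f,g)$ is a $\Theta$-deflation whenever $f$ is one and $g\colon B'\rightarrow C$ is arbitrary. What the paper's longer argument buys is an explicit identification of the third term of the resulting stable $\mathbb{E}$-triangle, namely the pullback $P$ with $\Theta(P)=\Theta(B)+\Theta(B')-\Theta(C)$; your argument only guarantees that \emph{some} stable $\mathbb{E}$-triangle with deflation $(f,g)$ exists, which happens to be all that is needed at the point where the lemma is applied.
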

\begin{proof}
By using \cite[Proposition 3.15]{Na} together with \cite[Lemma 3.21]{Wa}, we get the following   commutative diagram of stable $\mathbb{E}$-triangles.
	$$\xymatrix{
	& A' \vphantom{\big|} \ar@{=}[r]^{}  \ar@{>->}[d]^{}  & A'\vphantom{\big|}  \ar@{>->}[d] &\\
	A\vphantom{\big|} ~\ar@{>->}[r] \ar@{=}[d]^{}& P\ar@{->>}[d]^{k} \ar@{->>}[r]^{h} & B' \ar@{->>}[d]^-{g}\ar@{-->}[r]^-{}& \\
	A\vphantom{\big|} 	~\ar@{>->}[r]&  B  \ar@{-->}[d]^-{}\ar@{->>}[r]^{f} & C\ar@{-->}[d]^-{}\ar@{-->}[r]&\\
	&  &  }
$$
By \cite[Lemma 3.6]{Hu}, there exists an $\mathbb{E}$-triangle $P\stackrel{-k\choose h}\rightarrow B\oplus B'\stackrel{(f,g)}\rightarrow C\stackrel{}\dashrightarrow$. Observe that
$$\Theta(B\oplus B')=\Theta(B')+\Theta(B)=\Theta(A')+\Theta(C)+\Theta(P)-\Theta(A')=\Theta(C)+\Theta(P).$$
This finishes the proof.
\end{proof}

We say an $R$-linear length category $((\mathcal{A}, \mathbb{E},\mathfrak{s}),\Theta)$  is {\em $R$-finite} if $\dim_{R}\Hom_{\mathcal{A}}(A,B)<\infty$ and $\dim_{R}\mathbb{E}(A,B)<\infty$ for any $A,B\in\mathcal{A}$.
	\begin{definition} Let $(\mathcal{A},\Theta)$ be an $R$-finite length category with $\Theta_1=\Theta_{\infty}$. We say  $(\mathcal{A},\Theta)$ is of {\em finite type} if $\Theta_{1}$ is a finite set. Otherwise,  we say $(\mathcal{A},\Theta)$ is of {\em infinite type}.
	\end{definition}
	
	\begin{remark}\label{R-2} The most elementary example in representation theory is the finitely generated module category $\mod \Lambda$ for a finite dimensional algebra $\Lambda$ over a field. Denote by $\mathcal{S}$ the set of isomorphism classes of simple $\Lambda$-modules, which is a finite simple-minded system in $\mod \Lambda$. Thus $(\mod \Lambda,l_{\mathcal{S}})$ is a length category of finite type. We refer to Example \ref{ex} for a length category of infinite type.
	\end{remark}

	Let $(\mathcal{A},\Theta)$ be a length category of finite type. For $I\subseteq \mathbb{N}$, we denote by $\mathbb{G}_{I}$ the set of isomorphism classes of indecomposable objects in $\mathcal{A}$ with $\Theta^{\ast}(M)=I$.  Since $\Theta(\ind(\mathcal{A}))$ is totally ordered, the lexicographical order $\leq$ on $\Theta^{\ast}(\ind(\mathcal{A}))$ is totally ordered. By Lemma \ref{L-4-1}(3), the set of all GR measures in $\mathcal{A}$ is a chain (maybe infinite)
	$$I:\{1\}= I_{1}< I_{2}<\cdots<I_{n}<\cdots $$
	such that  $\ind(\mathcal{A})=\bigcup\mathbb{G}_{I_{i}}$. We refer to the chain $I$ as {\em Gabriel-Roiter chain}.  Now we are able to prove the following main result.
	\begin{theorem}\label{main2} Let  $((\mathcal{A}, \mathbb{E},\mathfrak{s}),\Theta)$  be a length category of finite type. Then the following statements are equivalent:
		
		$(1)$ $|\ind(\mathcal{A})|<\infty$.
		
		$(2)$ The set $\{\Theta(M)~|~M\in\ind(\mathcal{A})\}$ has an upper bound.
		
		$(3)$  The Gabriel-Roiter chain ${I_{1}}< {I_{2}}< \cdots$  has an upper bound.
		
		That is, the  first Brauer-Thrall conjecture holds.
	\end{theorem}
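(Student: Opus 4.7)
The plan is to prove the equivalences via the cycle $(1) \Rightarrow (2) \Rightarrow (3) \Rightarrow (1)$. The first implication is immediate, since a finite subset of $\mathbb{N}$ is bounded. For $(2) \Rightarrow (3)$, if $\Theta(M) \leq N$ for every $M \in \ind(\mathcal{A})$, each GR measure $\Theta^{\ast}(M)$ is a chain inside $\{1,2,\ldots,N\}$, giving at most $2^{N}$ distinct GR measures; the GR chain is therefore finite and its maximum element provides an upper bound.

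The substantive direction is $(3) \Rightarrow (1)$. I would first use the lexicographic structure of $\mathrm{Ch}(\mathbb{N})$ together with the recursive decomposition $I_{k+1} = \Theta^{\ast}(X) \cup \{\Theta(M_{k+1})\}$, obtained by taking a GR predecessor $X$ of a representative $M_{k+1} \in \mathbb{G}_{I_{k+1}}$, to show that an upper bound on the GR chain forces it to be finite. Writing it as $I_{1} < I_{2} < \cdots < I_{m}$, we have $\ind(\mathcal{A}) = \bigcup_{j=1}^{m}\mathbb{G}_{I_{j}}$, so it suffices to prove each stratum $\mathbb{G}_{I_{j}}$ is finite; this I would do by induction on $j$. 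The base case $\mathbb{G}_{I_{1}} = \Theta_{1}$ is finite by Lemma \ref{L-4-1} and the finite-type hypothesis.

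For the inductive step, fix $M \in \mathbb{G}_{I_{j}}$ and attach to it a GR predecessor $X$: an indecomposable with $X<M$ and $\Theta^{\ast}(X)$ maximal. By Proposition \ref{pro} and the recursive form of $\Theta^{\ast}$, $\Theta^{\ast}(X) < I_{j}$, so $X \in \mathcal{X}_{j} := \bigcup_{k<j}\mathbb{G}_{I_{k}}$, a finite set by the inductive hypothesis. In the corresponding stable $\mathbb{E}$-triangle $X \rightarrowtail M \twoheadrightarrow M' \dashrightarrow$, the length $\Theta(M') = \max(I_{j}) - \Theta(X)$ is fixed; Theorem \ref{main1} applied to each indecomposable summand $Y$ of $M'$, seen inside an appropriate direct sum constructed from a chain realizing $\Theta^{\ast}(M)$, forces $\Theta^{\ast}(Y) < I_{j}$, so $M'$ takes only finitely many values.

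The main obstacle is to bound, for fixed $X$ and $M'$, the number of indecomposable $M$'s fitting in $X \rightarrowtail M \twoheadrightarrow M' \dashrightarrow$. This is where $R$-finiteness and the Ext-lemma (Lemma \ref{L-2}) play a decisive role. Supposing for a contradiction that $M_{1}, M_{2}, \ldots$ is an infinite family of such pairwise non-isomorphic indecomposables, I would apply Lemma \ref{L-4-3} iteratively to the deflations $M_{i} \twoheadrightarrow M'$ to construct, for each $n$, a stable $\mathbb{E}$-triangle $X^{n} \rightarrowtail P_{n} \twoheadrightarrow M' \dashrightarrow$ with $P_{n}$ a subobject of $\bigoplus_{i=1}^{n}M_{i}$. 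Choosing $n > \dim_{R}\mathbb{E}(M', X)$ (finite by $R$-finiteness), Lemma \ref{L-2} forces $X$ to be a direct summand of $P_{n}$; combined with the Krull-Schmidt property and $X \not\cong M_{i}$ (which follows from the strict inequality $X < M_{i}$), this yields the desired contradiction.
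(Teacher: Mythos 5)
Your reduction $(1)\Rightarrow(2)\Rightarrow(3)$ and the overall plan for $(3)\Rightarrow(1)$ (induct along the Gabriel--Roiter chain and show each stratum $\mathbb{G}_{I_j}$ is finite) match the paper, but your inductive step rests on two claims that are both false, and neither uses hypothesis $(3)$, so they can be tested on any length category of finite type. Take $\Lambda$ the Kronecker algebra $k(1\rightrightarrows 2)$ over an infinite field and $\mathcal{A}=\Mod\Lambda$ with the usual composition length. First, you claim that the indecomposable summands $Y$ of the Gabriel--Roiter quotient $M'$ in $X\rightarrowtail M\twoheadrightarrow M'\dashrightarrow$ satisfy $\Theta^{\ast}(Y)<I_j$, citing Theorem \ref{main1}; but that theorem only controls the measure of a \emph{subobject} of a direct sum of indecomposables, and there is no dual statement for quotients. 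Indeed for $M=P_1$ (dimension vector $(1,2)$) the unique indecomposable proper subobject is $X=S_2$, and $M/X$ is an indecomposable of dimension vector $(1,1)$ with GR measure $\{1,2\}$, which is strictly \emph{larger} than $\Theta^{\ast}(P_1)=\{1,3\}$. So the quotient does not land in the lower strata and cannot be confined to a finite set by your induction hypothesis. Second, and more fatally, the ``main obstacle'' you isolate is not surmountable because the statement is false: with $X=S_2$ and $M'=S_1$ there are infinitely many pairwise non-isomorphic indecomposables $M_{\lambda}$, $\lambda\in\mathbb{P}^1(k)$, fitting into $X\rightarrowtail M_{\lambda}\twoheadrightarrow M'\dashrightarrow$, even though $\dim_R\mathbb{E}(M',X)=2$. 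The number of non-isomorphic middle terms is governed by orbits in $\mathbb{E}(M',X)$, not by its $R$-length. Concretely, your application of Lemma \ref{L-2} only shows that $X$ is a direct summand of the auxiliary object $P_n$, which is a proper subobject of $\bigoplus_i M_i$ rather than $\bigoplus_i M_i$ itself, so Krull--Schmidt gives no contradiction; moreover the iterated pullback has first term an iterated self-extension of $X$, not $X^{n}$, so the hypothesis of Lemma \ref{L-2} is not even met.

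The paper's proof circumvents exactly this obstruction by never trying to control $M$ through the pair $(X,M/X)$. Instead it fixes the finite set $\mathcal{N}=\add\bigcup_{i<t}\mathbb{G}_{I_i}$, replaces the GR inclusion by a $\Theta$-deflation $M\twoheadrightarrow N$ that is a left $\mathcal{N}$-approximation (Claim 2), and runs the coamalgamation argument on the single stable $\mathbb{E}$-triangle $M'\rightarrowtail\bigoplus_{j=1}^{s}M_j\twoheadrightarrow N_i\dashrightarrow$ built from $s$ pairwise non-isomorphic members of the stratum. There Theorem \ref{main1} applies legitimately (the $H_j$ are subobjects of $\bigoplus_j M_j$), the approximation property is what forces $\Theta^{\ast}(H_j)<I_t$, and Lemma \ref{L-2} is applied to a triangle whose \emph{middle term} is $\bigoplus_j M_j$, so that a lower-stratum summand splitting off genuinely contradicts indecomposability of the $M_j$. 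A length bound on each stratum (the boundedness lemma, Step 1) then converts the multiplicity bounds $s_j\leq\dim_R\mathbb{E}(N_i,H_j)$ into a bound on $s$. If you want to repair your argument, you would need to replace your quotient-based parametrization by this approximation-based one; as written, the inductive step does not go through.
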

	
	\begin{proof} Since $(\mathcal{A},\Theta)$ is of finite type, we may assume that $\Theta_{1}=\{S_{1},\cdots,S_{n}\}$. We divide the proof into the following steps:
		
		$\underline{\mathbf{Step~1}.}$ For any $t\geq2$, we define $$\mathcal{A}_{t}=\{M\in\ind(\mathcal{A})~|~M\notin\bigcup\limits_{1\leq i\leq t-1 }\mathbb{G}_{I_{i}}, M'\in\bigcup\limits_{1\leq i\leq t-1 }\mathbb{G}_{I_{i}}~\text{for any}~M'\in\ind(\mathcal{A})~\text{with}~M'< M \}.$$
		The following proof  is essentially due to Boundedness lemma  (cf. \cite[Section 3]{Ri}). 
		
		Take $M\in\mathcal{A}_{t}$. By Proposition \ref{P-2-6}(5), there exists a stable $\mathbb{E}$-triangle
		$$M'\stackrel{}\rightarrowtail M\stackrel{}\twoheadrightarrow S\stackrel{}\dashrightarrow$$
		such that $\Theta(M')=\Theta(M)-1$. Take a decomposition $M'=\bigoplus\limits_{i=1}^{m}M_{i}^{s_i}$ into indecomposable direct summands $M_{i}$. If $\dim_{R}\mathbb{E}(S,M_{i})< s_{i}$, then $M_{i}$ is a direct summand of $M$ by Lemma \ref{L-2}. This is a contradiction, hence
		$$\Theta(M)=\Theta(M')+1\leq\sum\limits_{i=1}^{m}\dim_{R}\mathbb{E}(S,M_{i})+1.$$
		The observation above implies that the length of objects in $\mathcal{A}_{t}$ is bounded for any $t\geq2$.

		$\underline{\mathbf{Step~2}.}$  Take $M\in\ind(\mathcal{A})$ with $M\notin\bigcup\limits_{1\leq i\leq t-1 }\mathbb{G}_{I_{i}}$. We claim that there exists an indecomposable subobject $M'\leq M$ such that $M'\in \mathcal{A}_{t}$. If $\Theta(M)=2$, then $\Theta(M')=1$ for any $M'< M$. This shows that $M\in\mathcal{A}_{2}$. For $\Theta(M)>2$, it suffices to consider the case of  $M\notin\mathcal{A}_{t}$. In this case, there exists an indecomposable proper subobject $M'< M$ such that $M'\notin\bigcup\limits_{1\leq i\leq t-1 }\mathbb{G}_{I_{i}}$. By induction hypothesis,  there exists an indecomposable subobject $M''\leq M'<M$ such that $M''\in \mathcal{A}_{t}$.

		$\underline{\mathbf{Step~3}.}$   We claim that $I_{t}=\min\{\Theta^{\ast}(M)|~M\in\mathcal{A}_{t}\}$. To see this, we take $N\in\mathcal{A}_{t}$ such that $\Theta^{\ast}(N)=\min\{\Theta^{\ast}(M)|~M\in\mathcal{A}_{t}\}$.  Since $N\in\mathcal{A}_{t}$, we have $I_{t}\leq\Theta^{\ast}(N)$. On the other hand, for any $M\in\mathbb{G}_{I_{t}}$, there exists an indecomposable subobject $M'\leq M$ such that $M'\in \mathcal{A}_{t}$ by Step 2. By (GR1), we have
		$$I_{t}\leq\Theta^{\ast}(N)\leq \Theta^{\ast}(M')\leq \Theta^{\ast}(M)=I_{t}.$$
		This shows that $I_{t}=\Theta^{\ast}(N)$.

		$\underline{\mathbf{Step~4}.}$  We will show that each $\mathbb{G}_{I_{t}}$ is a finite set. For $t=1$, we have $\mathbb{G}_{I_{1}}=\Theta_{1}=\{S_{1},\cdots,S_{n}\}$ by Lemma \ref{L-4-1}(3). Assume that the claim holds for $i\leq t-1$.  By (GR2), the objects in $\mathbb{G}_{I_{t}}$ have the same length. We denote  it by $l$.  Set $\mathcal{N}=\add\bigcup\limits_{1\leq i\leq t-1 }\mathbb{G}_{I_{i}}$.
		
		$\mathbf{Claim~1.}$ We have $\mathbb{G}_{I_{t}}\subseteq\mathcal{A}_{t}$. 	
		
		For any $M\in\mathbb{G}_{I_{t}}$, we have $\Theta^{\ast}(M)=I_{t}$. By Step 2, there exists an indecomposable subobject $M'\leq M$ such that $M'\in \mathcal{A}_{t}$. By Step 3, we get
		$$I_{t}\leq \Theta^{\ast}(M')\leq\Theta^{\ast}(M)=I_{t}.$$
		By using (GR2), we have $\Theta(M')=\Theta(M)$. Then $M\cong M'\in\mathcal{A}_{t}$ and thus $\mathbb{G}_{I_{t}}\subseteq\mathcal{A}_{t}$. 	
			\vspace{3mm}
			
		$\mathbf{Claim~2.}$ For any $M\in\mathbb{G}_{I_{t}}$, there exists  a $\Theta$-deflation $f:M\twoheadrightarrow M'$ such that $f$ is a left $\mathcal{N}$-approximation.
		
		By induction hypothesis, the set $\bigcup\limits_{1\leq i\leq t-1 }\mathbb{G}_{I_{i}}$ is finite. Since $(\mathcal{A},\Theta)$ is $R$-finite, we infer that $\mathcal{N}$ is functorially finite.  For any $M\in\mathbb{G}_{I_{t}}$, there exists a left $\mathcal{N}$-approximation $f:M\rightarrow N$.  We take a $\Theta$-decomposition $(i_f,X_f,j_f)$ of $f$. Note that $j_f:X_f\rightarrowtail N$ is a  $\Theta$-inflation. By using Theorem \ref{main1}, we infer that $X_f\in\mathcal{N}$. Take a morphism $g:M\rightarrow N'$ with $N'\in\mathcal{N}$. Since $f$ is a left $\mathcal{N}$-approximation, there exists a morphism $h:N\rightarrow N'$ such that $g=hf$. Thus $g=hf=hj_fi_f$.  The observation above implies that the $\Theta$-deflation $i_f:M\twoheadrightarrow X_f$ is a left $\mathcal{N}$-approximation. 
		\vspace{2mm}

We define
$$\mathbb{G}_{I_{t},N}=\{M\in\mathbb{G}_{I_{t}}~|~\text{there exists a $\Theta$-deflation $f:M\twoheadrightarrow N$ such that}$$
$$\text{$f$ is a left $\mathcal{N}$-approximation} \}.$$
and  $G=\{N\in\Iso(\mathcal{N})~|~ \Theta(N)<l~\}$.	Note that  $\bigcup\limits_{1\leq i\leq t-1 }\mathbb{G}_{I_{i}}$ is a finite set. Thus $G$ is a finite set. We assume that $G=\{N_1,N_2,\cdots,N_m\}$.
\vspace{2mm}

$\mathbf{Claim~3.}$ We have 
$\mathbb{G}_{I_{t}}=\bigcup\limits_{i=1}^{m}{G}_{I_{t},N_i}.$

Take $M\in\mathbb{G}_{I_{t}}$. By Claim 2, there exists  a $\Theta$-deflation $f:M\twoheadrightarrow M'$ such that $f$ is a left $\mathcal{N}$-approximation. It is obvious that $\Theta(M')<\Theta(M)=l$. 
\vspace{3mm}

By Claim 3, it  suffices to show that each $\mathbb{G}_{I_{t},N_i}$ is  finite.  Take pairwise non-isomorphic objects $M_{1},\cdots,M_{s}$ in $\mathbb{G}_{I_{t},N_i}$. Then $\Theta(M_{1})=\Theta(M_{2})=\cdots=\Theta(M_{s})=l$. For any $1\leq j\leq s$, there exists a $\Theta$-deflation $g_j:M_j\rightarrow N_i$ such that $g_j$ is a left $\mathcal{N}$-approximation. Set $g=(g_1,\cdots,g_s)$. By Lemma \ref{L-4-3},  there exists a stable $\mathbb{E}$-triangle
$$M'\stackrel{f}\rightarrowtail \bigoplus\limits_{j=1}^{s} M_{j}\stackrel{g}\twoheadrightarrow N_{i}\stackrel{}\dashrightarrow$$
in $\mathcal{A}$. Take a decomposition $M'=\bigoplus\limits_{j=1}^{q}H_{j}^{s_{j}}$ into indecomposable direct summands $H_{j}$. 
\vspace{3mm}

$\mathbf{Claim~4.}$ We have $\Theta^{\ast}(H_{j})<I_{t}$ for any $1\leq j\leq q$.

This proof is inspired from Coamalgamation lemma  (cf. \cite[Section 3]{Ri}). By  Theorem \ref{main1}, we have $\Theta^{\ast}(H_{j})\leq I_{t}$ for any $1\leq j\leq q$. Set $f=(f_{1},\cdots,f_{n})^{T}$. Suppose that  $\Theta^{\ast}(H_{1})=I_{t}$. Again by Theorem \ref{main1}, we may assume that $f_1u$ is an isomorphism for the canonical inclusion $u:H_1\rightarrow M'$. For $2\leq j\leq s$, we take a $\Theta$-decomposition $(a_{j},X_j,b_j)$ for $f_ju:H_1\rightarrow M_j$. Note that $\Theta(H_1)=\Theta(M_j)=l$. If $\Theta(X_j)=l$, then $H_1\cong X_j\cong M_j$. This is a contradiction. By using Theorem \ref{main1}, we infer that $X_j\in\mathcal{N}$. Recall that $g_1$ is a left $\mathcal{N}$-approximation. Then there exists a morphism $h_j:N_i\rightarrow M_j$ such that $f_ju=h_jg_1f_1u$. Thus
$$(-g_1f_1)u=(\sum\limits_{j=2}^{s} g_jf_j)u=(\sum\limits_{j=2}^{s} g_jh_j)g_1f_1u$$
and then $-g_1=(\sum\limits_{j=2}^{s} g_jh_j)g_1.$ By Proposition \ref{P-2-6}(3), the morphism $\sum\limits_{j=2}^{s} g_jh_j:N_i\rightarrow N_i$ is a $\Theta$-deflation.  This implies that $\sum\limits_{j=2}^{s} g_jh_j$ is actually an isomorphism. Thus $(g_2,\cdots,g_s):\bigoplus\limits_{j=2}^{s} M_{j}\rightarrow N_i$ is a retraction. Then $N_i$ is a direct summand of $\bigoplus\limits_{j=2}^{s} M_{j}$. This is a contradiction.
\vspace{3mm}

By Claim 4, we infer that $H_{j}\in\bigcup\limits_{1\leq i\leq t-1 }\mathbb{G}_{I_{i}}$ for any $1\leq j\leq q$. Suppose that  $ \dim_R\mathbb{E}(N_i,H_{j})<s_j$ for some $1\leq j\leq q$. Then Lemma \ref{L-2} implies that $H_j\cong M_k$ for some $1\leq k\leq s$.  This is a contradiction. Thus $s_{j}\leq \dim_{R}\mathbb{E}(N_{i},H_{j})$ for any $1\leq j\leq q$.  Note that $\Theta(N_i)<l$. Then
 $$sl=\Theta(M')+\Theta(N_i)=\sum_{j=1}^{q}s_{j}\Theta(H_{j})+\Theta(N_i)< \sum_{j=1}^{q}\dim_{R}\mathbb{E}(N_i,H_{j})\Theta(H_{j})+l.$$
Recall that $\mathbb{G}_{I_{t}}=\bigcup\limits_{i=1}^{m}{G}_{I_{t},N_i}$ and  $\bigcup\limits_{1\leq i\leq t-1 }\mathbb{G}_{I_{i}}$ is  finite. Set $|\bigcup\limits_{1\leq i\leq t-1 }\mathbb{G}_{I_{i}}|=h$ and 
$$e=\max\{\dim_{R}\mathbb{E}(N_{i},K)~|~1\leq i\leq m~\text{and}~K\in\bigcup\limits_{1\leq i\leq t-1 }\mathbb{G}_{I_{i}}\}.$$ 
 By Claim 1, we have $\mathbb{G}_{I_{i}}\subseteq\mathcal{A}_{i}$ for any $i\geq 1$. By Step 1, the length of objects in $\bigcup\limits_{1\leq i\leq t-1 }\mathbb{G}_{I_{i}}$ has an upper bound $l'$.  Since each $H_{j}\in\bigcup\limits_{1\leq i\leq t-1 }\mathbb{G}_{I_{i}}$, we have $\Theta(H_j)\leq l'$ and $q\leq h$. We conclude that
$$s<\frac{ \sum\limits_{j=1}^{q}\dim_{R}\mathbb{E}(N_i,H_{j})\Theta(H_{j})}{l}+1\leq \frac{ hel'}{l}+1.$$
		This implies that $|\mathbb{G}_{I_{t},N_{i}}|<\frac{ hel'}{l}+1$ and thus $|\mathbb{G}_{I_{t}}|< m\frac{ hel'}{l}+m$.

		$\underline{\mathbf{Step~5}.}$   Now, we  are ready to prove the first Brauer-Thrall conjecture.
		
		$(1)\Rightarrow(2)$: Obvious.
		
		$(2)\Rightarrow(3)$: For $t\geq1$, we define $\mathbf{L}_{t}=\{M\in\ind(\mathcal{A})~|~\Theta(M)=t\}$. On the one hand, we have   $\Theta^{\ast}(M)\subseteq\{1,2,\cdots,t\}$ for any $M\in\mathbf{L}_{t}$. Thus there are only finitely many possible GR measures for $\mathbf{L}_{t}$. On the other hand, the objects in each $G_{I_{i}}$ have the same length. It follows that the Gabriel-Roiter chain has an upper bound.
		
		$(3)\Rightarrow(1)$: By Step 4, we have $|\ind(\mathcal{A})|=|\bigcup\limits_{i=1}^{m}{G}_{I_{t},N_i}|<\infty$.
	\end{proof}

	Recall that  length categories correspond precisely to those categories arising from simple-minded systems. By this, we can  give another version of the Theorem \ref{main2}.
	\begin{corollary} Let $(\mathcal{A}, \mathbb{E},\mathfrak{s})$ be an $R$-finite extriangulated category. For a finite semibrick $\mathcal{X}$, the following conditions are equivalent:
		
		$(1)$ $|\ind(\Filt_{\mathcal{A}}(\mathcal{X})|<\infty$.
		
		$(2)$ The set $\{l_{\mathcal{X}}(M)~|~M\in\ind(\Filt_{\mathcal{A}}(\mathcal{X}))\}$ has an upper bound.
	\end{corollary}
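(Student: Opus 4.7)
The plan is to reduce the corollary to Theorem~\ref{main2} applied to the pair $(\Filt_{\mathcal{A}}(\mathcal{X}), l_{\mathcal{X}})$. The core observation is that $\Filt_{\mathcal{A}}(\mathcal{X})$ is the natural ambient category in which $\mathcal{X}$ becomes a simple-minded system, so the machinery of length categories developed in the paper applies to it directly.

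First I would set up the length category structure. By Example~\ref{E-2-3}(3), the extension-closed subcategory $\Filt_{\mathcal{A}}(\mathcal{X})$ inherits an extriangulated structure from $\mathcal{A}$, and by construction $\mathcal{X}$ is a simple-minded system in $\Filt_{\mathcal{A}}(\mathcal{X})$. Theorem~\ref{main0}(1) then says that $(\Filt_{\mathcal{A}}(\mathcal{X}), l_{\mathcal{X}})$ is a length category. Since $\Hom$ and $\mathbb{E}$ on $\Filt_{\mathcal{A}}(\mathcal{X})$ are the restrictions of those on $\mathcal{A}$, the $R$-linear and $R$-finite hypotheses are automatic.

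Next I would verify the finite-type hypothesis required by Theorem~\ref{main2}. Write $\Theta := l_{\mathcal{X}}$. Since $l_{\mathcal{X}}(M)=1$ exactly when $M$ admits an $\mathcal{X}$-filtration of length one, we get $\Theta_1 = \mathcal{X}$, which is finite by assumption. To see that $\Theta_\infty = \Theta_1$, I would argue by induction on $n \geq 2$ that $\Theta'_n = \emptyset$: for any $M \in \Filt_{\mathcal{A}}(\mathcal{X})$ with $\Theta(M) = n \geq 2$, the last step of any minimal $\mathcal{X}$-filtration provides a $\Theta$-deflation $M \twoheadrightarrow X$ with $X \in \mathcal{X} \subseteq \Theta_{n-1}$, which is a non-zero morphism, so $M \notin {}^{\perp}\Theta_{n-1}$ and hence $M \notin \Theta'_n$. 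Consequently $\Theta_n = \Theta_1 = \mathcal{X}$ for every $n$, so $(\Filt_{\mathcal{A}}(\mathcal{X}), l_{\mathcal{X}})$ is a length category of finite type in the paper's sense.

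Finally I would apply Theorem~\ref{main2} to $(\Filt_{\mathcal{A}}(\mathcal{X}), l_{\mathcal{X}})$; the equivalence of conditions (1) and (2) there is exactly the corollary's claim. The only step requiring care is the inductive verification that $\Theta_\infty = \Theta_1$; this is the place where the choice of $\mathcal{X}$ as simple-minded system is used essentially, but the existence of a surjection onto $\mathcal{X}$ from any filtered object makes the argument short.
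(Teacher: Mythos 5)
Your proposal is correct and follows exactly the route the paper intends: the corollary is stated as a reformulation of Theorem \ref{main2} obtained by viewing $\Filt_{\mathcal{A}}(\mathcal{X})$ as an extriangulated category in which $\mathcal{X}$ is a finite simple-minded system, so that $(\Filt_{\mathcal{A}}(\mathcal{X}), l_{\mathcal{X}})$ is an $R$-finite length category of finite type. Your inductive verification that $\Theta_1=\Theta_\infty$ is fine (alternatively it is immediate from the criterion $\Theta_1=\Theta_\infty \Leftrightarrow \Theta=l_{\Theta_1}$ cited in the proof of Proposition \ref{P-2-6}(5), since $\Theta_1=\mathcal{X}$ here); the only unspoken point is that the $\Theta$-deflation $M\twoheadrightarrow X$ is nonzero, which follows because a zero $\Theta$-deflation factors through $0$ and Proposition \ref{P-2-6}(3) would then force $X=0$.
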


	The first Brauer-Thrall conjecture has been proved by Roiter in \cite{Ro} for finite-dimensional algebras and refined by  Ringel in \cite{Ri}. As a special case of Theorem \ref{main2}, we can recover this well-known fact as follows.
	
	\begin{corollary} \text{\rm (\cite{Ro},\cite{Ri})}  Let $\Lambda$ be a finite dimension algebra over a field. Then $\Lambda$ is of bounded representation type if and only if $\Lambda$ is of finite representation type.
	\end{corollary}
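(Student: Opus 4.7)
The plan is to deduce this corollary directly from Theorem \ref{main2} by specializing to the length category $(\mod \Lambda, l_{\mathcal{S}})$ introduced in Remark \ref{R-2}, where $\mathcal{S}$ denotes the finite set of isomorphism classes of simple $\Lambda$-modules. The first step is to verify that this setup satisfies all the hypotheses of Theorem \ref{main2}. Since $\mathcal{S}$ is a simple-minded system in $\mod \Lambda$, Theorem \ref{main0} guarantees that $(\mod \Lambda, l_{\mathcal{S}})$ is a length category. Taking $R$ to be the center of $\Lambda$ (or the base field $k$), one has $\dim_R \Hom_\Lambda(M,N) < \infty$ and $\dim_R \mathbb{E}(M,N) = \dim_R \Ext^1_\Lambda(M,N) < \infty$ for all finitely generated $\Lambda$-modules $M, N$, so $\mod \Lambda$ is $R$-finite. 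Moreover, $\Theta_1 = \mathcal{S}$ is finite by assumption, so the length category is of finite type.

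The second step is to identify the conditions of Theorem \ref{main2} with the classical terminology. One checks that $l_{\mathcal{S}}(M)$ coincides with the ordinary composition length of $M$ as a $\Lambda$-module: indeed, every composition series of $M$ is an $\mathcal{S}$-filtration in $\mod \Lambda$, and conversely every $\mathcal{S}$-filtration refines to a composition series of the same length since its factors are already simple. Therefore the statement that $\{l_{\mathcal{S}}(M) \mid M \in \ind(\mod \Lambda)\}$ is bounded is exactly the condition that $\Lambda$ is of bounded representation type, while $|\ind(\mod \Lambda)| < \infty$ is exactly finite representation type.

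With these translations in hand, the equivalence $(1) \Leftrightarrow (2)$ of Theorem \ref{main2} yields precisely the desired equivalence, completing the proof. The argument is essentially a matter of translating standard representation-theoretic notions into the extriangulated-length-category framework, so there is no serious obstacle; the entire content of the classical Brauer--Thrall theorem has already been absorbed into the general result of Theorem \ref{main2} and into the verification that $(\mod\Lambda, l_{\mathcal{S}})$ meets its hypotheses.
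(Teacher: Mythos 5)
Your proposal is correct and follows exactly the paper's route: the paper's proof is a one-line citation of Remark \ref{R-2} and Theorem \ref{main2}, and you have simply spelled out the implicit verifications (that $(\mod\Lambda, l_{\mathcal{S}})$ is an $R$-finite length category of finite type and that $l_{\mathcal{S}}$ coincides with composition length). No discrepancy to report.
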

	\begin{proof} This immediately follows from Remark \ref{R-2} and Theorem \ref{main2}.
	\end{proof}

	For infinite type, we provide a counter-example for the first Brauer-Thrall conjecture.
	\begin{example}\label{ex}
		Let $\Lambda$ be the path algebra of the quiver $1\longrightarrow2\longrightarrow3$. The Auslander-Reiten quiver $\Gamma$ of the bounded derived category $D^{b}(\Lambda)$ is as follows:
		\begin{equation*}
			\xymatrix@!=0.5pc{
				&& S_3[-1]\ar[dr]  && S_2[-1]\ar[dr] && S_1[-1]\ar[dr] && P_1\ar[dr] && \\
				&&\cdots\cdots\quad& P_2[-1]\ar[dr]\ar[ur] && I_2[-1]\ar[ur]\ar[dr] && P_2\ar[ur]\ar[dr] && I_2\ar[dr] & \cdots\cdots\\
				&&&& P_1[-1]\ar[ur] && S_3\ar[ur] && S_2\ar[ur] && S_1}
		\end{equation*}
		Let $\mathcal{X}$ be the set consisting of the isomorphism classes of objects in the top row of $\Gamma$, i.e. $$\mathcal{X}=\bigcup_{i=2k,k\in \mathbb{Z}}\{P_{1}[i-1],S_{3}[i],S_{2}[i],S_{1}[i]\}.$$
		Clearly, $(D^{b}(\Lambda),l_{\mathcal{\mathcal{X}}})$ is a length category of infinite type and $|\ind(D^{b}(\Lambda))|=\infty$. However, we have $l_{\mathcal{\mathcal{X}}}(M)\leq 3$ for any $M\in\ind(D^{b}(\Lambda))$. Thus the first Brauer-Thrall conjecture fails in $D^{b}(\Lambda)$.
	\end{example}

	We finish this section with a straightforward example illustrating Theorem \ref{main2}.
	\begin{example}Keep the notation used in Example  \ref{ex} and set $\mathcal{Y}=\{P_{1}[-1],S_{3},S_{2}\}.$ Then  the Auslander-Reiten quiver of $\mathcal{A}:=\Filt_{D^{b}(\Lambda)}(\mathcal{Y})$ is given by
		\begin{equation*}
			\xymatrix@!=0.5pc{
				& &  S_1[-1]\ar[dr]^{} & &\\
				&I_2[-1] \ar[dr]\ar[ur]  & &  P_2\ar[dr]^{}&     \\
				P_{1}[-1]\ar[ur]  &  &  S_3\ar[ur]^{} & &  S_2  }
		\end{equation*}
		By this, we obtain a length category  $(\mathcal{A},l_{\mathcal{Y}})$ of finite type. Let us list all 6 indecomposable objects, the corresponding lengths and GR measures as follows:
		$$
		\begin{tabular}{|p{4.1cm}|p{1.2cm}|p{2.2cm}|}
			\hline
			indecomposable object& length&  GR measure\\
			\hline
			$P_{1}[-1]$&  1  &   \{1\}\\
			\hline
			$S_{3}$&    1          & \{1\}  \\
			\hline
			$S_{2}$&      1                &  \{1\}   \\
			\hline
			$I_{2}[-1]$&     2       &   \{1,2\}    \\
			\hline
			$P_{2}$&      2       &   \{1,2\}  \\
			\hline
			$S_{1}[-1]$&      3           &   \{1,2,3\}    \\
			\hline
		\end{tabular}
		$$
		The Gabriel-Roiter chain of the form $\{1\} <\{1,2\} <\{1,2,3\}.$
	\end{example}

	\section*{Acknowledgments}
	Li Wang is supported by the National Natural Science Foundation of China (Grant No. 12301042) and the  Natural Science Foundation of Universities of Anhui (No. 2023AH050904). Jiaqun Wei is supported by the National Natural Science Foundation of China (Grant No. 12271249) and the Natural Science Foundation of Zhejiang Province (Grant No. LZ25A010002).

\end{document}